\numberwithin{equation}{section}
\newcommand{\E}{\mathbb{E}}
\newcommand{\N}{\mathbb{N}}
\newcommand{\Pb}{\mathbb{P}}
\newcommand{\R}{\mathbb{R}}
\newcommand{\Z}{\mathbb{Z}}
\newcommand{\vertiii}[1]{{\left\vert\kern-0.25ex\left\vert\kern-0.25ex\left\vert #1
    \right\vert\kern-0.25ex\right\vert\kern-0.25ex\right\vert}}
\def\R{\mathbb{R}}
\def\dh2l{\mathbf{d}_{\mathbb{H}_{2\ell}}}
\def\d2{\mathbf{d}_2}
\newcommand{\Indi}[1]{\mathbbm{1}_{{#1}}}
\newcommand{\Comb}[1]{\left(\begin{array}{c}#1\end{array}\right)}
\newtheorem{theorem}{Theorem}[section]
\newtheorem{corollary}[theorem]{Corollary}
\theoremstyle{remark}
\theoremstyle{definition}
\newcounter{dummy} \numberwithin{dummy}{section}
\newtheorem{Definition}[dummy]{Definition}
\newtheorem{Theorem}[dummy]{Theorem}
\newtheorem{Corollary}[dummy]{Corollary}
\newtheorem{Remark}[dummy]{Remark}
\def\1{\mathbbm{1}}
\begin{document}

\title[Quantitative limit theorems via relative log-concavity]{Quantitative limit theorems via relative log-concavity}
\author{Arturo Jaramillo, James Melbourne}
\address{Arturo Jaramillo: Department of Probability and statistics, Centro de Investigaci\'on en matem\'aticas (CIMAT)}
\email{jagil@cimat.mx}
\address{James Melbourne: Department of Probability and statistics, Centro de Investigaci\'on en matem\'aticas (CIMAT)}
\email{james.melbourne@cimat.mx}

\keywords{log-concave random variables}
\date{\today}

\subjclass{60F05,52B40,52A40}
 
\begin{abstract}
In this paper we develop tools for studying limit theorems by means of convexity. We establish bounds for the discrepancy in total variation between probability measures $\mu$ and $\nu$ such that $\nu$ is log-concave with respect to $\mu$. We discuss a variety of applications, which include geometric and binomial approximations to sums of random variables, and discrepancy between Gamma distributions.  As special cases we obtain a law of rare events for intrinsic volumes, quantitative bounds on proximity to geometric for infinitely divisible distributions, as well as binomial and Poisson approximation for matroids.

\end{abstract}

\maketitle

\section{Introduction}
In this manuscript we will study the proximity between probability measures with a perspective based in convexity. The theory we develop is designed to deviate from classical tools such as Fourier analysis, Stein's method or method of moments, in order to reach examples not accessible by the  mathematical technology currently available in the field of limit theorems. The families of distributions considered are those whose elements $\nu$ that are log-concave with respect to a reference measure $\mu$ either supported over $\Z$ or over $\R$, in the sense that the Radon-Nikodym derivative $\frac{d \nu}{d \mu}$ can be represented by $e^{-V}$ for a convex function $V$.\\

\noindent The topic of limit theorems has experienced significant growth in recent years,
much of which can be traced back to the celebrated paper ``A bound for the error in the normal approximation to the distribution of a sum of
dependent random variables" (see \cite{ChStein}), published by Charles Stein in 1972. This paper produced a seminal tool in the study of limit theorems: the so called “Stein’s method”, which nowadays refers to the collection of techniques that
allow bounding probability distances by means of differential operators. The reader is invited to consult the book \cite{ChenGoldShao} for a classic introduction to the topic and \cite{NoPe} for a perspective oriented to its application to Gaussian functionals. One should be aware that despite the 
outstanding reach of this technique, most of the hypothesis required for its implementation rely on a partial understanding of the dependence structure of the random variables under consideration, which is not obviously accessible in the setting we consider. \\

\noindent Let us motivate our setting. By choice of $\mu$, the log-concavity of the Radon-Nikodym derivative $\frac{d\nu}{d\mu}$ characterizes several important classes of random variables.  When $\mu$ is the counting measure on $\mathbb{Z}$, $\nu$ log-concave with respect to $\mu$ implies that $\nu$ itself is log-concave as a function on $\mathbb{Z}$. Note when  $\nu$ has non-negative support, this is equivalent to $\nu$ log-concave with respect to a geometric distribution $\mu[n] = (1-p)^n p$ for $p \in (0,1)$.  Such sequences seem to have been first systematically studied by Fekete \cite{fekete1912problem}, however it has been known since at least Newton, see \cite{stanley1989log, hardy1952inequalities, newton1732arithmetica} that the normalized  coefficients $p_n$ of a polynomial 
\begin{align} \label{eq: polynomial}
P(X) = \sum_{n=0}^m \binom{m}{n} p_n X^n
\end{align}
with real roots, satisfy $p_n^2 \geq p_{n+1}p_{n-1}$ and is thus log-concave in the sense above when $\{n: p_n >0 \}$ is a contiguous interval.  In this context the ``raw'' coefficients $a_n = \binom{m}{n} p_n$ in \eqref{eq: polynomial} possess  a much stronger form of concavity.  The sequence $a_n$ is log-concave with respect to a binomial distribution $\mu$, and referred to as ultra log-concave of order $m$ in the terminology of Pemantle \cite{pem2000} which we write as ULC$(m)$, see also \cite{borcea2009negative,liggett1997ultra}. The resolution of the strong Mason conjecture \cite{anari2018log, branden2020lorentzian}, yields that the cardinality of independent sets of size $n$ in a matroid with ground set of cardinality $m$ is ULC$(m)$. In the limit with $m \to \infty$ we recover the ``ultra log-concave'' distributions, namely, log-concave with respect to a Poisson distribution $\mu$, see \cite{johnson2007log, MR2598065, johnson2013log}.   By the Alexandrov-Fenchel inequality, the sequence of intrinsic volumes associated to a convex body form an ultra log-concave sequence \cite{schneider2014convex}, see also \cite{AMM22,lotz2020concentration} for recent work.   It remains an open question \cite{amelunxen2011geometric} what, if any, log-concavity properties the ``conic intrinsic volumes'' utilized in \cite{amelunxen2014living} possess.  Further classes defined through log-concavity with respect to a discrete  reference measure can be found in \cite{marsiglietti2020geometric}.   We mention in passing that log-concavity has long been associated with notions of negative dependence \cite{pem2000}, an analog and opposite theory to positive dependency \cite{esary1967association, fortuin1971correlation, ahlswede1978inequality}. Though we will not expand upon these connections here, in some sense the results here can be considered as limit theorems under the assumption of a kind of abstract negative dependency structure. 
In the continuous setting, when $\mu$ is the Lebesgue setting this corresponds (through the Pr\'ekopa-Liendler inequality) to the log-concave measures\footnote{A Radon measure $\mu$ is log-concave $\mu((1-t) A+t B) \geq \mu^{1-t}(A) \mu^{t}(B)$, as such log-concave and more generally $s$-concave measures \cite{borell1974convex, bobkov2016hyperbolic}, are defined on a topological vector space.}.  Let us also mention when $\mu$ is a standard Gaussian, that the measures log-concave with respect to $\mu$ are referred to as strongly log-concave (see \cite{saumard2014log}).  The strongly log-concave measures are exactly the measures that can be realized as the pushforward of said Gaussian by a $1$-Lipschitz measures \cite{caffarelli2000monotonicity}.\\



\noindent Taking this into consideration, we have developed a technique alternative to Stein's method, which could also allow for quantification of probability distances between log-concave probability distributions with respect to their reference measures, by leveraging convexity properties of the underlying densities in place of the dependence structure of the underlying measures, to answer the following question; is it possible to make a universal approximation of them with simpler distributions? in other words: is it possible to establish a limit theorem valid for every choice of log-concave distribution?\\

\noindent Let us outline the paper. After describing our main tool for bounding probability distances (Theorem \ref{Theoremmain:1}), we will present a series of applications both for classical examples in probability theory (such as sums of independent random variables) and for novel problems, arising in the context of random matroids and intrinsic volumes. For the classical examples, we show perspectives for tackling the approximation problems, which are alternative to those based on Stein's method, and yield new bounds for the associated error measured in the total variation metric. In particular, we would like to bring the readers attention to Corollaries \ref{cor:Geometricpoissonlthm} and \ref{cor:geomapproxtoID}, where we present assessments of the proximity of compound Poisson and compound geometric distribution towards a geometric random variable. In Corollary \ref{cor:poissonvspoissonbinom} we present a bound for the approximation of Poisson binomial random variables with a binomial distribution, which up to our knowledge, has not been found before. We prove as well bounds for geometric approximations in the context of sums of random variables (see Corollary \ref{cor:Geometricapprox}). Regarding the problems related to random matroids, we show a general bound for assessing the distance of these objects towards Binomial and Poisson distributions and specialize these results to the case of uniform and partitioned matroids. For the case of intrinsic volumes, we present a general bound for Poissonian approximations (similar in spirit to the matroid case) and then we use it for proving a law of rare events for the intrinsic volume random variables associated to products of convex sets (See Corollary \ref{cor:rareeventsintrinsvol}).
\noindent Section \ref{Proofmain} is dedicated to the proof of Theorem \ref{Theoremmain:1} while in Section \ref{sec:continuouspart} we present the continuous version of the results and some applications related to the discrepancy between Gamma distributions.

\section{The discrete regime}\label{eq:discreteregime}
%
%
In this subsection we present upper bounds on distance between two distributions when one is log-concave with respect to the other. The proof will be presented in Section \ref{Proofmain}.

Consider two (not necessarily finite) real measures $\mu$ and $\nu$, supported over $\Z$. We say that $\nu$ is log-concave with respect to $\mu$ if $\nu$ is absolutely continuous with respect to $\mu$ and the Radon-Nikodym derivative $\frac{d\nu}{d\mu}$ is such that the mapping $k\mapsto\Phi_{\mu,\nu}(k)$, with
\begin{align*}
\Phi_{\mu,\nu}(k)
  &:=\left\{\begin{array}{ccc}
\log( \frac{d\nu}{d\mu}[k])  & \text{ if } & \nu[k]\neq0\\
-\infty  & \text{ if } & \nu[k]=0
\end{array}\right.
\end{align*}
is concave in the sense that for all $i,j\in\Z$, and $\lambda\in[0,1]$ satisfying $(1-\lambda)i+\lambda j\in\Z$, 
\begin{align}\label{eq:convexityofPhi}
\Phi_{\mu,\nu}((1-\lambda)i+\lambda j)
  &\geq (1-\lambda)\Phi_{\mu,\nu}(i)+\lambda\Phi_{\mu,\nu}(j).
\end{align}

\begin{Theorem}\label{Theoremmain:1}
Suppose that $\nu$ and $\mu$ are probability measures and $\nu$ is log-concave with respect to $\mu$ and the support of $\mu$ is connected. Define 
$q_{k}=\nu[k]$ and $p_{k}:=\mu[p]$. Then, for all $\ell\in\Z$ satisfying $q_{\ell}q_{\ell+1}>0$, we  have the bounds
\begin{align*}
d_{TV}(\mu,\nu)
  &\leq \int_{\Z}\bigg{(}1-
	\frac{p_{\ell}}{q_{\ell}}\left(\frac{p_{\ell+1}q_{\ell}}{p_{\ell}q_{\ell+1}}\right)^{|y-\ell|}
	\bigg{)}_{+}\nu(dy).
\end{align*}
and 
\begin{align*}
d_{TV}(\mu,\nu)
  &\leq \int_{\Z}\bigg{(}
	\frac{q_{\ell}}{p_{\ell}}\left(\frac{p_{\ell}q_{\ell+1}}{p_{\ell+1}q_{\ell}}\right)^{|y-\ell|}-1
	\bigg{)}_{+}\mu(dy).
\end{align*}
In particular, if $\ell$ satisfies the ratio identity $\frac{p_{\ell}}{p_{\ell+1}}=\frac{q_{\ell}}{q_{\ell+1}}$, then we have that $\frac{p_{\ell}}{q_{\ell}}\geq 1$ and the above bound simplifies to 
\begin{align*}
d_{TV}(\mu,\nu)
  &\leq (\frac{q_{\ell}}{p_{\ell}}-1)\wedge(1-\frac{p_{\ell}}{q_{\ell}}).
\end{align*}

%
%
%
%
%
%

\end{Theorem}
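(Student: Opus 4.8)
The plan is to reduce the whole statement to a single pointwise comparison of densities, which discrete concavity then supplies. Write $\Phi:=\Phi_{\mu,\nu}$ and recall the elementary identity $d_{TV}(\mu,\nu)=\sum_k(q_k-p_k)_+$; since $\nu\ll\mu$ and both supports are intervals, this can be rewritten simultaneously as $\int_\Z\big(1-\tfrac{p_y}{q_y}\big)_+\,\nu(dy)$ and as $\int_\Z\big(\tfrac{q_y}{p_y}-1\big)_+\,\mu(dy)$, the first integrand being read $\nu$-a.e.\ on $\supp\nu$ and the second vanishing identically on $\supp\mu\setminus\supp\nu$ (so nothing is lost there). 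With these two representations, the first claimed bound follows as soon as $\tfrac{p_y}{q_y}\ge\tfrac{p_\ell}{q_\ell}\big(\tfrac{p_{\ell+1}q_\ell}{p_\ell q_{\ell+1}}\big)^{|y-\ell|}$ holds for $\nu$-a.e.\ $y$ (compose with the decreasing map $t\mapsto(1-t)_+$), and the second follows as soon as $\tfrac{q_y}{p_y}\le\tfrac{q_\ell}{p_\ell}\big(\tfrac{p_\ell q_{\ell+1}}{p_{\ell+1}q_\ell}\big)^{|y-\ell|}$ holds for $\nu$-a.e.\ $y$ (compose with the increasing map $t\mapsto(t-1)_+$) --- the same pointwise inequality read two ways.

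Taking logarithms and setting $c:=\Phi(\ell)-\Phi(\ell+1)=\log\tfrac{p_{\ell+1}q_\ell}{p_\ell q_{\ell+1}}$, this pointwise inequality amounts to bounding $\Phi(y)$ above by $\Phi(\ell)-c\,|y-\ell|$ on $\supp\nu$. In its signed form $\Phi(y)\le\Phi(\ell)-c\,(y-\ell)$ it is exactly the assertion that $\Phi$ lies below the affine extension of the chord through $(\ell,\Phi(\ell))$ and $(\ell+1,\Phi(\ell+1))$, which is immediate from \eqref{eq:convexityofPhi}: concavity is equivalent to the forward increments $\Phi(k+1)-\Phi(k)$ being non-increasing in $k$ along the interval $\supp\nu$, so telescoping them from $\ell$ out to $y$ --- upward when $y>\ell$, downward when $y<\ell$ --- gives the chord estimate (connectedness of $\supp\mu$, hence that $\Phi$ has an interval domain, is what makes the telescoping legitimate). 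Exponentiating and feeding this back into the two integral representations produces the displayed bounds. I expect the crux to be the passage between the signed distance $y-\ell$ produced by the chord and the symmetric weight $|y-\ell|$ that appears in the statement: this requires tracking the sign of $c$ on each side of $\ell$ and isolating the range of $\ell$ for which each of the two bounds is the informative one.

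For the ``in particular'' clause, the ratio identity $\tfrac{p_\ell}{p_{\ell+1}}=\tfrac{q_\ell}{q_{\ell+1}}$ is precisely $c=0$, i.e.\ $\Phi(\ell)=\Phi(\ell+1)$. Then the vanishing increment $\Phi(\ell+1)-\Phi(\ell)=0$ together with monotonicity of the increments forces $\Phi$ to be non-decreasing on $\supp\nu\cap(-\infty,\ell]$ and non-increasing on $\supp\nu\cap[\ell+1,\infty)$, hence $\Phi(y)\le\Phi(\ell)$ for every $y$, equivalently $q_y\le\tfrac{q_\ell}{p_\ell}\,p_y$ for all $y$. Summing this over $y$ and using $\sum_y q_y=1=\sum_y p_y$ gives $\tfrac{q_\ell}{p_\ell}\ge 1$ (so $\tfrac{p_\ell}{q_\ell}\le1$). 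Now substituting $c=0$ into the two general bounds collapses their integrands to the constants $1-\tfrac{p_\ell}{q_\ell}$ and $\tfrac{q_\ell}{p_\ell}-1$, each integrated against a probability measure; alternatively one gets these directly from $q_y\le\tfrac{q_\ell}{p_\ell}p_y$ by writing $d_{TV}(\mu,\nu)=\sum_{y:\,q_y>p_y}(q_y-p_y)$ and bounding the summand once by $q_y\big(1-\tfrac{p_\ell}{q_\ell}\big)$ and once by $p_y\big(\tfrac{q_\ell}{p_\ell}-1\big)$. Either way $d_{TV}(\mu,\nu)\le\big(1-\tfrac{p_\ell}{q_\ell}\big)\wedge\big(\tfrac{q_\ell}{p_\ell}-1\big)$, and since $\tfrac{q_\ell}{p_\ell}\ge1$ this minimum equals $1-\tfrac{p_\ell}{q_\ell}$.
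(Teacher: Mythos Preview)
Your approach and the paper's are the same at heart: both reduce everything to the chord inequality $\Phi(y)\le\Phi(\ell)+(\Phi(\ell+1)-\Phi(\ell))(y-\ell)$ on the interval $\supp\nu$, exponentiate, and integrate. The paper's Section~\ref{Proofmain} packages this chord bound as an induction on the auxiliary sequences $f_k=p_k/p_{k-1}$, $g_k=q_k/q_{k-1}$, $a_k=p_{k-1}p_{k+1}/p_k^2$ (Step~I, for $k\ge\ell$) and then handles $k<\ell$ by a reflection $k\mapsto 2\ell-k$ (Step~II). When you unwind both steps they give exactly your telescoped increments, i.e.\ the signed estimate $q_k/p_k\le(q_\ell/p_\ell)\big(\tfrac{p_\ell q_{\ell+1}}{p_{\ell+1}q_\ell}\big)^{k-\ell}$ for every $k$. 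Your route is more economical: you go straight to the affine majorant and avoid the case-by-case bookkeeping with $\psi$ and the eight possibilities for $(\Indi{\{p_{\ell}>0\}},\Indi{\{p_{\ell+1}>0\}},\Indi{\{p_{\ell+2}>0\}})$, and you do not need the separate reflection argument.

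The issue you flag about the signed exponent $y-\ell$ versus the symmetric $|y-\ell|$ is real, and the paper does not resolve it either: its Step~I and Step~II both terminate in the signed formula (written there as $\big(\tfrac{p_{\ell+1}q_\ell}{p_\ell q_{\ell+1}}\big)^{k-\ell}$), and the final line simply replaces $k-\ell$ by $|k-\ell|$ without comment. The continuous analogue in the paper, Theorem~\ref{Theoremmain:2}, is stated with the signed $(x-z)$, which is what the chord argument actually delivers; in general the signed bound does not imply the $|y-\ell|$ bound on the side $y<\ell$. So on this point you are in the same position as the paper, except that you are explicit about it. Note that every application in the paper uses either the ratio-matching case $c=0$ (where the base is $1$ and the distinction disappears) or $\ell=0$ with support in $\N_0$ (where $y-\ell=|y-\ell|$), so nothing downstream is affected.

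Your treatment of the ``in particular'' clause is complete and slightly sharper in presentation than the paper's: from $c=0$ you get $\Phi(y)\le\Phi(\ell)$ globally on $\supp\nu$, sum $q_y\le(q_\ell/p_\ell)p_y$ to obtain $q_\ell/p_\ell\ge1$, and then read off both constant integrands. That matches the paper's conclusion and is all that the applications require.
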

As mentioned in the introduction, tha above result applies to the following settings 
\begin{enumerate}
\item[-] Binomial and Poisson approximations to Poisson binomial distributions.
\item[-] Binomial approximations to random matroids.
\item[-] Poisson approximations to random intrinsic volumes.
\item[-] Geometric approximation to compund Poisson and compound geometric distributions.
\end{enumerate}
The following corollary that relates the law of a Poisson binomial distribution and a Binomial distribution.

\begin{Corollary}[Binomial approximation to Poisson binomial]\label{cor:poissonvspoissonbinom}
Let $\{\xi_i\}_{i\geq1 }$ be a collection of Bernoulli random variables with $\alpha_{i}:=\Pb[\xi_{i}=0]$ and $p_i:=\Pb[\xi_{i}=1]$. For a given $n\geq 1$,  define 
$S_{n}:=\sum_{i=1}^{n}\xi_i$, namely, $S_{n}$ is distributed according to the so called Poisson-Binomial distribution of parameters $p_1,\dots, p_n$. Consider the arithmetic mean $\mathfrak{m}_n$ of the sequence $\alpha_{1}^{-1},\dots, \alpha_n^{-1}$ and let $\mathfrak{g}_n$ be associated geometric mean, defined by 
\begin{align*}
\mathfrak{g}_{n}
  &:=(\prod_{i=1}^{n}\alpha_{i}^{-1})^{1/n}.
\end{align*}
Then, if $B_{n}$ is a Binomial distribution with parameters $n$ and $1-1/\mu_{n}$, we have that 
\begin{align}\label{eq:dTVBinomPoissonbinom}
d_{TV}(S_{n},B_{n})
  &\leq ((\mathfrak{m}_n/\mathfrak{g}_{n})^{n}-1)\wedge(1-(\mathfrak{g}_{n}/\mathfrak{m}_n)^{n}).
\end{align}
Moreover, 
\begin{align}\label{eq:dTVBinomPoissonbinom2}
d_{TV}(S_{n},B_{n})
  &\leq  \exp\{\sum_{i=1}^{n}(p_i/\alpha_i-\mathfrak{r}_n)^2
  +\frac{1}{3n^2}\sum_{i=1}^n(p_i/\alpha_i)^3
  \}-1,
\end{align}
where $\mathfrak{r}_n$ denotes the arithmetic mean of the terms $p_i/\alpha_i$,
\begin{align}\label{eq:Amathfrakdef}
\mathfrak{r}_n
  &:=\frac{1}{n}\sum_{i=1}^n\frac{p_i}{\alpha_i}
\end{align}
\end{Corollary}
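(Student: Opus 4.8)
The plan is to read \eqref{eq:dTVBinomPoissonbinom} off as the ``ratio identity'' case of Theorem~\ref{Theoremmain:1} applied at the bottom of the support, and then to obtain the explicit \eqref{eq:dTVBinomPoissonbinom2} from elementary estimates on $\log(1+\cdot)$. Throughout I work in the non-degenerate case $p_i\in[0,1)$ for all $i$ with $(p_i)$ not identically zero, so that all $\alpha_i>0$, $\mathfrak{m}_n\in(1,\infty)$ and $r:=1-1/\mathfrak{m}_n\in(0,1)$; the excluded cases make both sides of the asserted bounds trivial.

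First I would take $\mu$ to be the law of $B_n\sim\mathrm{Bin}(n,r)$ and $\nu$ the law of $S_n$, and check that $\nu$ is log-concave with respect to $\mu$, whose support $\{0,\dots,n\}$ is connected. Indeed the probability generating function of $S_n$ is $\prod_{i=1}^n(\alpha_i+p_i x)$, a degree-$n$ polynomial with only real roots, so by Newton's inequalities for real-rooted polynomials the masses $q_k:=\Pb[S_n=k]$ are ultra log-concave of order $n$, i.e. $k\mapsto q_k/\binom{n}{k}$ is log-concave; since $\log\big(q_k/(\binom{n}{k}r^k(1-r)^{n-k})\big)$ differs from $\log\big(q_k/\binom{n}{k}\big)$ by an affine function of $k$, the Radon--Nikodym derivative $d\nu/d\mu$ is log-concave, as required, and $q_0 q_1>0$.

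Next I would verify the ratio identity at $\ell=0$. Writing $p_k:=\Pb[B_n=k]$ one has $p_0/p_1=(1-r)/(nr)$, while $q_0=\prod_i\alpha_i$ and $q_1=\big(\prod_i\alpha_i\big)\sum_i p_i/\alpha_i$, so $q_0/q_1=1/\sum_i p_i/\alpha_i$. Since $\sum_i p_i/\alpha_i=\sum_i(\alpha_i^{-1}-1)=n(\mathfrak{m}_n-1)$, the equality $p_0/p_1=q_0/q_1$ holds precisely when $r=1-1/\mathfrak{m}_n$, which is the stated parameter of $B_n$. Theorem~\ref{Theoremmain:1} then gives $d_{TV}(S_n,B_n)\le (q_0/p_0-1)\wedge(1-p_0/q_0)$, and substituting $q_0=\prod_i\alpha_i=\mathfrak{g}_n^{-n}$ and $p_0=(1-r)^n=\mathfrak{m}_n^{-n}$, so that $q_0/p_0=(\mathfrak{m}_n/\mathfrak{g}_n)^n$, yields \eqref{eq:dTVBinomPoissonbinom}.

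Finally, for \eqref{eq:dTVBinomPoissonbinom2} I would write $(\mathfrak{m}_n/\mathfrak{g}_n)^n-1\le \exp\{\log((\mathfrak{m}_n/\mathfrak{g}_n)^n)\}-1$ and estimate the exponent. With $r_i:=p_i/\alpha_i$ one has $\mathfrak{m}_n=1+\mathfrak{r}_n$ and $\mathfrak{g}_n^n=\prod_i(1+r_i)$, hence $\log((\mathfrak{m}_n/\mathfrak{g}_n)^n)=\sum_{i=1}^n\big(\log(1+\mathfrak{r}_n)-\log(1+r_i)\big)$; applying Taylor's theorem to $\log(1+\cdot)$ about $\mathfrak{r}_n$, the first-order terms cancel because $\sum_i(r_i-\mathfrak{r}_n)=0$, and since the second derivative $-(1+t)^{-2}$ is at least $-1$ on $[0,\infty)$ while all $r_i,\mathfrak{r}_n\ge0$, one gets $\log((\mathfrak{m}_n/\mathfrak{g}_n)^n)\le\tfrac12\sum_i(p_i/\alpha_i-\mathfrak{r}_n)^2$, which already dominates the exponent in \eqref{eq:dTVBinomPoissonbinom2}; equivalently the stated form follows from the one-sided bounds $x-\tfrac{x^2}{2}\le\log(1+x)\le x-\tfrac{x^2}{2}+\tfrac{x^3}{3}$ on $[0,\infty)$ together with the identity $\sum_i r_i^2-n\mathfrak{r}_n^2=\sum_i(r_i-\mathfrak{r}_n)^2$, after which exponentiating finishes the proof. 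The substantive steps are the two in the middle paragraphs --- recognizing the Poisson--binomial law as ultra log-concave of order $n$, so Theorem~\ref{Theoremmain:1} applies with a binomial reference, and the short computation pinning the reference parameter to $1-1/\mathfrak{m}_n$; the only mildly delicate point is the $\log$-estimate above, where one must keep the cancellation $\sum_i(r_i-\mathfrak{r}_n)=0$ explicit so as not to lose the quadratic-deviation structure.
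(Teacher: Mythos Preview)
Your proof is correct and follows the same overall plan as the paper: establish that $S_n$ is ULC$(n)$ (you argue directly via Newton's inequalities for the real-rooted generating polynomial, the paper cites Liggett), verify the ratio identity $q_0/q_1=p_0/p_1$ at $\ell=0$ to pin the binomial parameter at $1-1/\mathfrak{m}_n$, and then invoke the special case of Theorem~\ref{Theoremmain:1} to obtain \eqref{eq:dTVBinomPoissonbinom}. For \eqref{eq:dTVBinomPoissonbinom2} the paper expands $\log(1+\cdot)$ around $0$ using the two-sided bound $x-\tfrac{x^2}{2}\le\log(1+x)\le x-\tfrac{x^2}{2}+\tfrac{x^3}{3}$, which yields $\tfrac12\sum_i(r_i-\mathfrak{r}_n)^2$ plus the cubic correction; your second-order expansion around $\mathfrak{r}_n$ with the remainder bound $|f''|\le1$ is a slightly cleaner variant that even avoids the cubic term, and in either case the stated exponent in \eqref{eq:dTVBinomPoissonbinom2} dominates, so the conclusion follows.
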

The following observations are relevant for comparing the above result with the previous literature
\begin{Remark}
The term \eqref{eq:Amathfrakdef} is close in spirit to the bounds from \cite[Theorem 1]{MR1093412}, where the Poisson Binomial distribution  $S_n$ is approximated by a binomial law with a slightly different (and perhaps, more popular) success probability 
$\mathfrak{p}_n=\sum_{i=1}^np_i$. Here the upper bound for the associated distance in total variation is
\begin{align*}
(1-\mathfrak{p}_n^n)(1-(1-\mathfrak{p}_n)^n)((n+1)\mathfrak{p}_n(1-\mathfrak{p}_n))^{-1}\sum_{i=1}^{n}(p_i-\mathfrak{p}_n)^2.
\end{align*}
We would like to highlight the analogy of the terms $\sum_{i=1}^{n}(p_i-\mathfrak{p}_n)^2$ and $\mathfrak{r}_n$, as both measure the deviation of a sequence of parameters ($p_i$ and $p_i/\alpha_i$, respectively) around their mean.
\end{Remark}

\begin{Remark}
In view of the remark above, one observes that the classical methodology for choosing the appropriate parameter $p$ of the Binomial approximation (which consists on "matching in the  expectations"), is not the only natural one, as the procedure applied in corollary \ref{cor:poissonvspoissonbinom} (which is based on the idea of matching quotients of probabilities), also leads to good approximation results. One can thus pose the following question, which up to our knowledge has not yet been answered\\

\noindent \textit{Open problem}\\
What are the optimal parameters $m,p$ for approximating a Poisson binomial distribution with a Binomial distributions, in the sense: if $B_{m,p}$ is distributed according to a Binomial distribution of parameters $m,p$, then
\begin{align*}
d_{TV}(B_{m,p},S_{n})
  &\leq d_{TV}(B_{m^{\prime},p^{\prime}},S_{n})
\end{align*}
for any choice of $(m^{\prime},p^{\prime})\in\N\times[0,1]$?
\end{Remark}

\begin{proof}[Proof of Corollary \ref{cor:poissonvspoissonbinom}]
By Liggett \cite{liggett1997ultra}, $S_{n}$ is log-concave with respect to $B_{n}$. Thus, by Theorem \ref{Theoremmain:1}, it suffices to verify $\Pb[S_{n}=1]/\Pb[S_{n}=0]=\Pb[B_{n}=1]/\Pb[B_{n}=0]$ and estimate $(\frac{\Pb[S_n=0]}{\Pb[B_n=0]}-1)\wedge(1-\frac{\Pb[B_n=0]}{\Pb[S_n=0]})$.\\

\noindent Next we prove that $\Pb[S_{n}=1]/\Pb[S_{n}=0]=\Pb[B_{n}=1]/\Pb[B_{n}=0]$. To show this notice, that 
\begin{align}\label{eq:Snprobtozeroone}
\Pb[S_{n}=1]
  &=\sum_{i=1}^np_i\prod_{j\neq i} \alpha_{j}
	=\sum_{i=1}^n\frac{p_i}{\alpha_i}\prod_{j=1}^n \alpha_{j}
	=\Pb[S_{n}=0]\sum_{i=1}^n\frac{p_i}{\alpha_i}.
\end{align}
so we can write
\begin{align}\label{eq:Snprobtozerooneprime}
\Pb[S_{n}=1]
  &=\Pb[S_{n}=0]n(\mathfrak{m}_n-1).
\end{align}
Specializing this identity in the case where $\alpha_{i}=1/\mathfrak{m}_n$ for all $i=1,\dots, n$ yields
\begin{align*}
\Pb[B_{n}=1]
  &=\Pb[B_{n}=0]n(\mathfrak{m}_n-1),
\end{align*}
which yields $\Pb[S_{n}=1]/\Pb[S_{n}=0]$. By Theorem \ref{Theoremmain:1}, we thus conclude that 
\begin{align*}
d_{TV}(S_{n},B_{n})
  &\leq (\Pb[S_{n}=0]/\Pb[B_{n}=0]-1)\wedge(1-\Pb[B_{n}=0]/\Pb[S_{n}=0]),
\end{align*}
which reads 
\begin{align*}
d_{TV}(S_{n},B_{n})
  &\leq (\mathfrak{m}_n^{n} \prod_{i}a_i-1)\wedge(1-\mathfrak{m}_n^{-n} \prod_{i}a_i^{-1})
\end{align*}
Identity \eqref{eq:dTVBinomPoissonbinom} follows from here.\\

\noindent To prove \eqref{eq:dTVBinomPoissonbinom2}, we observe that 

\begin{align}\label{eq:lofquofofmeqands}
\log((\mathfrak{m}_n/\mathfrak{g}_n)^n)
  &=n(\log(\frac{1}{n}\sum_{i=1}^{n}\frac{1}{\alpha_i})-\sum_{i=1}^{n}\frac{1}{n}\log(\frac{1}{\alpha_i}))\\
  &=n(\log(1+\frac{1}{n}\sum_{i=1}^{n}\frac{p_i}{\alpha_i})-\sum_{i=1}^{n}\frac{1}{n}\log(1+\frac{p_i}{\alpha_i}))\nonumber.
\end{align}
By Taylor approximation, we deduce that for every $x>-1$,
\begin{align*}
\log(1+x)
  &=x-\frac{x^2}{2}+2\int_{[0,1]^3}\frac{x^3}{(1+\lambda_1\lambda_2\lambda_3x)^3}\lambda_2\lambda_1^2d\lambda_3d\lambda_2d\lambda_1.
\end{align*}
In particular, 
\begin{align}\label{ineq:logTaylor}
x-\frac{x^2}{2}
  \leq \log(1+x)
  \leq x-\frac{x^2}{2}+\frac{x^3}{3}.
\end{align}
Applying these inequalities in \eqref{eq:lofquofofmeqands}, we deduce that 
\begin{align*}
\frac{1}{n}\log((\mathfrak{m}_n/\mathfrak{g}_n)^n)
  &\leq 
  \frac{1}{2}\sum_{i=1}^{n}\frac{1}{n}\left(\frac{p_i}{\alpha_i}\right)^2-\frac{1}{2}\left(\frac{1}{n}\sum_{i=1}^{n}\frac{p_i}{\alpha_i}\right)^2+\frac{1}{3}\left(\frac{1}{n}\sum_{i=1}^{n}\frac{p_i}{\alpha_i}\right)^3.
\end{align*}
The sum of the two terms in the right equals to the variance of the mapping $i\mapsto p_i/\alpha_i$ under the normalized counting measure over $\{1,\dots, n\}$. Consequently, 
\begin{align*}
\log((\mu_n/\mathfrak{g}_n)^n)
  &\leq 
  \frac{1}{2}\sum_{i=1}^{n}\left(\frac{p_i}{\alpha_i}-\mathfrak{r}_i^n\right)^2+\frac{1}{3n^2}\left(\sum_{i=1}^{n}\frac{p_i}{\alpha_i}\right)^3.
\end{align*}
The result easily follows from here.
\end{proof}

Next we present results related to the Poisson approximation to the Poisson binomial distributions. This topic has been extensively studied with a variety of techniques, and its development started with the results by Prohorov in \cite{MR0116370}, later refined by Kerstan, Vervaat. Hodges and Le Cam (see \cite{MR324784}, \cite{MR242235}, \cite{MR117812} and \cite{MR142174}). These results where subsequently addressed with a Stein's method perspective in the groundbreaking paper \cite{MR428387} by Chen. the next result describes a bound with a shape quite similar to the optimal one, presented for instance in the papers \cite{MR142174} and \cite{MR428387}, but proved with a perspective completely different. One should observe as well that the parameter that we use for the approximation is different from the previous literature, so a difference in the shape of the bound should be expected.

\begin{Corollary}[Poisson approximation to Poisson Binomial]
Let $S_{n}$ and $\xi_{i}$ be as in Corollary \ref{cor:poissonvspoissonbinom}. Let $N_{n}$ be a Poisson distribution with parameter $\lambda_n:=n(\mathfrak{m}_n-1)$, then 
\begin{align*}
 d_{TV}(S_{n},M_{n})
   &\leq  \exp\{\sum_{i=1}^n(p_i/\alpha_i)^2\}-1.
\end{align*}
\end{Corollary}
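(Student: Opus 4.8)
The plan is to follow the same template as in the proof of Corollary \ref{cor:poissonvspoissonbinom}, replacing the binomial reference law $B_n$ by a Poisson law $N_n$. The starting point is the fact (due to Liggett \cite{liggett1997ultra}, in the $m\to\infty$ limit) that the Poisson-binomial law $S_n$ is ultra log-concave, hence log-concave with respect to the Poisson distribution $N_n$ for \emph{any} value of the parameter; this puts us exactly in the setting of Theorem \ref{Theoremmain:1} with $\mu = N_n$ and $\nu = $ law of $S_n$. To be able to invoke the simplified bound at the end of Theorem \ref{Theoremmain:1}, I first need to pick the Poisson parameter $\lambda_n$ so that the ratio identity $p_\ell/p_{\ell+1} = q_\ell/q_{\ell+1}$ holds at $\ell = 0$, i.e. $\Pb[N_n=1]/\Pb[N_n=0] = \Pb[S_n=1]/\Pb[S_n=0]$. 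For the Poisson law the left-hand ratio is $\lambda_n$, and by \eqref{eq:Snprobtozerooneprime} the right-hand ratio is $n(\mathfrak{m}_n-1) = \sum_{i=1}^n p_i/\alpha_i$; hence the correct choice is $\lambda_n = n(\mathfrak{m}_n-1)$, which is exactly the parameter in the statement.

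With the ratio identity in force at $\ell=0$, Theorem \ref{Theoremmain:1} gives
\begin{align*}
d_{TV}(S_n,N_n)
  &\leq \Big(\frac{\Pb[S_n=0]}{\Pb[N_n=0]}-1\Big)\wedge\Big(1-\frac{\Pb[N_n=0]}{\Pb[S_n=0]}\Big)
  \leq \frac{\Pb[S_n=0]}{\Pb[N_n=0]}-1.
\end{align*}
So the whole problem reduces to estimating the ratio $\Pb[S_n=0]/\Pb[N_n=0]$ from above. Here $\Pb[S_n=0]=\prod_{i=1}^n\alpha_i$ and $\Pb[N_n=0]=e^{-\lambda_n}=e^{-\sum_i p_i/\alpha_i}$, so
\begin{align*}
\log\frac{\Pb[S_n=0]}{\Pb[N_n=0]}
  &= \sum_{i=1}^n\log\alpha_i + \sum_{i=1}^n\frac{p_i}{\alpha_i}
  = \sum_{i=1}^n\Big(\frac{p_i}{\alpha_i}-\log\big(1+\tfrac{p_i}{\alpha_i}\big)\Big),
\end{align*}
using $\alpha_i^{-1}=1+p_i/\alpha_i$ (since $\alpha_i+p_i=1$). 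Now the elementary inequality $x-\log(1+x)\leq x^2$ valid for $x\geq 0$ — which follows from the left half of \eqref{ineq:logTaylor}, namely $\log(1+x)\geq x-x^2/2 \geq x - x^2$ — bounds each summand by $(p_i/\alpha_i)^2$. Exponentiating gives $\Pb[S_n=0]/\Pb[N_n=0] \leq \exp\{\sum_{i=1}^n(p_i/\alpha_i)^2\}$, and subtracting $1$ yields the claimed bound.

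I do not anticipate a serious obstacle: the argument is essentially a specialization of the already-proved Corollary \ref{cor:poissonvspoissonbinom}, with the only genuinely new ingredients being (i) the observation that the ratio identity forces $\lambda_n = n(\mathfrak{m}_n-1)$, and (ii) the clean single-variable estimate $x-\log(1+x)\le x^2$ for $x\ge 0$ in place of the more delicate Taylor bookkeeping needed in the binomial case. The only point requiring a little care is making sure the hypotheses of Theorem \ref{Theoremmain:1} are met — that $N_n$ has connected (i.e. all of $\mathbb{Z}_{\geq0}$) support and that $q_0 q_1 = \Pb[S_n=0]\Pb[S_n=1]>0$, which holds as long as at least one $p_i\in(0,1)$ and all $\alpha_i>0$; the degenerate cases are trivial since then $S_n$ is itself deterministic or the statement is vacuous.
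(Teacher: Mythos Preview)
Your proposal is correct and follows essentially the same approach as the paper: verify ultra log-concavity of $S_n$ (the paper cites Walkup \cite{walkup1976polya} for closure of ULC under convolution rather than Liggett, but the content is the same), match the ratio at $\ell=0$ to force $\lambda_n=n(\mathfrak m_n-1)$, apply the simplified bound of Theorem~\ref{Theoremmain:1}, and then control $\log\big(\Pb[S_n=0]/\Pb[N_n=0]\big)=\sum_i\big(p_i/\alpha_i-\log(1+p_i/\alpha_i)\big)$ via the left half of \eqref{ineq:logTaylor}. The paper's computation is identical up to notation, and your extra remark on the nondegeneracy hypotheses ($q_0q_1>0$, connected support of the Poisson law) is a welcome bit of care that the paper leaves implicit.
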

\begin{Remark}
By choosing $p_{i}=r/n$, for some $r>0$, we obtain the  quantitative bound of the law of rare events, 
\begin{align*}
 d_{TV}(S_{n},M_{n})
   &\leq  \exp\left\{n\left(\frac{r}{n-r}\right)^2\right\}-1
   =O(1/n),
\end{align*}
which has been studied quite successfully with Stein's method techniques. 
\end{Remark}

\begin{proof}
Note that $S_n$ is ultra log-concave, as $\xi_i$ are ultra log-concave, and ULC variables are stable under independent summation \cite{walkup1976polya}. Moreover, by the choice of $\lambda_{n}$ and \eqref{eq:Snprobtozerooneprime}, we have that 
\begin{align*}
\frac{\Pb[S_{n}=1]}{\Pb[S_{n}=0]}
  &=\frac{\Pb[N_{n}=1]}{\Pb[N_{n}=0]}, 
\end{align*}
so by Theorem \ref{Theoremmain:1}, 
\begin{align*}
 d_{TV}(S_{n},M_{n})
   &\leq \frac{\Pb[S_{n}=0]}{\Pb[M_{n}=0]}-1
   = e^{\lambda}\mathfrak{m}_n^{-1}-1.
\end{align*}
Proceeding as in \eqref{eq:lofquofofmeqands}, we can show that
\begin{align*}
\log(e^{\lambda}\mathfrak{m}_n^{-1})
  &=n(\frac{1}{n}\sum_{i=1}^{n}\frac{p_i}{\alpha_i}-\sum_{i=1}^{n}\frac{1}{n}\log(1+\frac{p_i}{\alpha_i}))\nonumber.
\end{align*}
Thus, by \eqref{ineq:logTaylor}, 
\begin{align*}
\log(e^{\lambda}\mathfrak{m}_n^{-1})
  &\leq \sum_{i=1}^{n}\left(\frac{p_i}{\alpha_i}\right)^2.
\end{align*}
The result easily follows from here.
\end{proof}

\subsection{Applications to matroids}\label{sec:randommatroids}
In this section we present approximation results for  matroids. Although the methodology utilized is identical to the previous subsection, the definitions of the objects of interest require a non-negligible amount of notation. A finite matroid $\mathcal{M}$ is a pair $(X,\mathcal{I})$ of a ground set $X$ with $|X|=n$ and a nonempty collection of idependent sets $\mathcal{I}\subset 2^{X}$ that satisfies the following 

\begin{enumerate}
\item[(i)] (Hereditary property) If $S\subset T$ and $T\in\mathcal{I}$, then $S\in\mathcal{I}$, 
\item[(ii)] (Exchange property)	If $S,T\in\mathcal{I}$ and $|S|<|T|$, then there exists an element $x\in T\backslash S$ such that $S\cup\{x\}\in \mathcal{I}$.
\end{enumerate}  
The rank $rk(\mathcal{M})$ of a matroid is the maximal size of the independent sets of $S$, namely, $rk(\mathcal{M}):=\max_{S\in\mathcal{I}}|S|$. Finally, we define $\mathcal{I}_{k}:=\{S\in\mathcal{I}\ ;\ |S|=k\}$ and let $I(k):=|\mathcal{I}_{k}|$, that is to say, $I(k)$ is the number of independent sets of size $k$. Countings related to $I(k)$ can be calculated by the introduction of the measure $\nu_{\mathcal{M}}$, supported in $\{1,\dots, rk(\mathcal{M})\}$ and with 

\begin{align}\label{eq:matroidrv}
\nu_{\mathcal{M}}[\{k\}]
   :=(\sum_{j=1}^nI(j))^{-1}I(k).
\end{align}
The next result describes the proximity of $\nu_{\mathcal{M}}$ and a binomial distribution 
\begin{Corollary}\label{eq:matroidbinom}
Let $\gamma$ be the probability law of a random variable $B$ with Binomial distribution of sample size $n=|X|$ and success probability 
$p=(1+\frac{n-m}{m+1}\frac{I(m)}{I(m+1)})^{-1}$.
Then, for every $m\leq n-1$, We have that 
\begin{align*}
d_{TV}(\nu_{\mathcal{M}},\gamma)
  &\leq \frac{I(m)}{\Comb{n\\m}p^{m}(1-p)^{n-m}\sum_{j=1}^nI(j)}-1
\end{align*}
and 
\begin{align*}
d_{TV}(\nu_{\mathcal{M}},\gamma)
  &\leq 1-\frac{1}{I(m)}\Comb{n\\m}p^{m}(1-p)^{n-m}\sum_{j=1}^nI(j)
\end{align*}
\end{Corollary}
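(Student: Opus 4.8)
The plan is to apply Theorem~\ref{Theoremmain:1} directly, with $\mu=\gamma$ the binomial law $B(n,p)$ and $\nu=\nu_{\mathcal{M}}$, choosing $\ell=m$; the prescribed success probability is exactly the one making the ratio identity hold at $\ell=m$, so the ``in particular'' case of the theorem applies. Throughout I would write $p_k=\gamma[k]=\binom{n}{k}p^k(1-p)^{n-k}$ and $q_k=\nu_{\mathcal{M}}[k]$, and implicitly restrict to $m\leq rk(\mathcal{M})-1$, so that $I(m)I(m+1)>0$, the stated $p$ lies in $(0,1)$, and $q_m q_{m+1}>0$.

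The first step is to check that $\nu_{\mathcal{M}}$ is log-concave with respect to $\gamma$, and this is the only place a nontrivial external fact is needed. By the resolution of the strong Mason conjecture \cite{anari2018log, branden2020lorentzian}, the sequence $\bigl(I(k)\bigr)_{k}$ is ultra log-concave of order $n=|X|$, which means precisely that $k\mapsto I(k)\big/\bigl(\binom{n}{k}p^k(1-p)^{n-k}\bigr)$ is a log-concave sequence on $\{k:I(k)>0\}$ for every $p\in(0,1)$; dividing by $\sum_{j}I(j)$ does not change this, so the concavity condition \eqref{eq:convexityofPhi} holds for $\Phi_{\gamma,\nu_{\mathcal{M}}}$. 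I would also note that the support of $\gamma$ is the connected set $\{0,\dots,n\}$ and that, by the hereditary property, $\{k:I(k)>0\}=\{0,1,\dots,rk(\mathcal{M})\}$ is an interval, so all hypotheses of Theorem~\ref{Theoremmain:1} are in force.

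Next I would carry out the short computation $\dfrac{p_m}{p_{m+1}}=\dfrac{\binom{n}{m}}{\binom{n}{m+1}}\cdot\dfrac{1-p}{p}=\dfrac{m+1}{n-m}\cdot\dfrac{1-p}{p}$, compare it with $\dfrac{q_m}{q_{m+1}}=\dfrac{I(m)}{I(m+1)}$, and observe that the stated $p=\bigl(1+\tfrac{n-m}{m+1}\tfrac{I(m)}{I(m+1)}\bigr)^{-1}$ is exactly the value for which $\tfrac{1-p}{p}=\tfrac{n-m}{m+1}\tfrac{I(m)}{I(m+1)}$, i.e.\ for which $p_m/p_{m+1}=q_m/q_{m+1}$. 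With the ratio identity verified at $\ell=m$, the ``in particular'' clause of Theorem~\ref{Theoremmain:1} gives $d_{TV}(\nu_{\mathcal{M}},\gamma)\leq\bigl(\tfrac{q_m}{p_m}-1\bigr)\wedge\bigl(1-\tfrac{p_m}{q_m}\bigr)$, and substituting $q_m=I(m)\bigl(\sum_{j=1}^n I(j)\bigr)^{-1}$ and $p_m=\binom{n}{m}p^m(1-p)^{n-m}$ into $q_m/p_m$ produces the two displayed inequalities of the statement verbatim.

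I do not expect a genuine obstacle here: the hard work is already encapsulated in Theorem~\ref{Theoremmain:1} and in the (imported) ultra-log-concavity of matroid independent-set counts. The one point that needs care is the bookkeeping of supports and of the admissible range of $m$ --- for $m\geq rk(\mathcal{M})$ the ratio $I(m)/I(m+1)$ is ill-defined and the theorem does not apply, so the statement should be understood with $m\leq rk(\mathcal{M})-1$.
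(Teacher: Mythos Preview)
Your proposal is correct and follows essentially the same route as the paper: invoke the strong Mason conjecture to obtain ULC$(n)$ of $(I(k))_k$, hence log-concavity of $\nu_{\mathcal{M}}$ with respect to the binomial law $\gamma$; check that the stated $p$ makes the ratio identity $p_m/p_{m+1}=q_m/q_{m+1}$ hold; and read off the two bounds from the ``in particular'' clause of Theorem~\ref{Theoremmain:1}. Your write-up is in fact slightly more careful than the paper's own proof (you make the support conditions and the restriction $m\le rk(\mathcal{M})-1$ explicit, and your ratio computation is cleaner).
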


\begin{proof}
By the resolution of the strong Mason conjecture  \cite{AnLiuGhVin, branden2020lorentzian},  the sequence $I(k)$ satisfies 
\begin{align}\label{eq:Ulogconcvmatroids}
I(k)^2
  &\geq \left(1+\frac{1}{k}\right)\left(1+\frac{1}{n-k} \right)I(k-1)I(k+1).
\end{align}
 Note by direct computation, inequality \eqref{eq:Ulogconcvmatroids} is equivalent to the log-concavity of the sequence $\{x_k \}_{k=0}^n$, defined by $x_k \coloneqq \frac{I(k)}{b_k}$ where $b_k = \binom{n}{k} (1-p)^{n-k}p^k$ for $p \in (0,1)$.  That is $I(k)$ is log-concave with respect to $B \sim$ binomial$(p,n)$.  We can easily check that the equality is attained if for any sequence of terms $I(k)$ are replaced by constant multiples of the atoms of a binomial distribution.
We can easily check that  
\begin{align*}
\Pb[B_{n}=m+1]/\Pb[B_{n}=m]
  &=\frac{n-m}{m+1}(p/(1-p)).
\end{align*}
By definition of $p$, the right hand side coincides with  $\nu[2]/\nu[1]=I[2]/I[1]$ so that $\nu[2]/\nu[1]=\gamma[2]/\gamma[1]$. Corollary \ref{eq:matroidbinom} then follows from Theorem \ref{Theoremmain:1}.
\end{proof}

By similar arguments, we can prove the following Poisson approximation
\begin{Corollary}\label{Corollary:Poissonmatroidsapprox}
Let $\gamma$ be the probability law of a random variable $M$ with Poisson distribution of parameter 
$\lambda=(m+1)\frac{I(m+1)}{I(m)}$.
Then we have that 
\begin{align*}
d_{TV}(\nu_{\mathcal{M}},\gamma)
  &\leq \frac{m!e^{\lambda}I(m)}{\lambda^m\sum_{j=1}^nI(j)}-1
\end{align*}
\end{Corollary}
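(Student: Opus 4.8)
The plan is to mimic the proof of Corollary \ref{Corollary:Poissonmatroidsapprox}'s sibling, the Poisson approximation to the Poisson binomial distribution, since the structural ingredients are identical. First I would invoke the resolution of the strong Mason conjecture (as recalled in the proof of Corollary \ref{eq:matroidbinom} via inequality \eqref{eq:Ulogconcvmatroids}) together with the limiting form of ultra log-concavity: the sequence $I(k)$ is ultra log-concave of order $n$, hence in particular ultra log-concave in the unrestricted sense, which is exactly the statement that $\nu_{\mathcal{M}}$ is log-concave with respect to a Poisson distribution. So for \emph{any} Poisson parameter $\lambda$, the measure $\nu_{\mathcal{M}}$ is log-concave with respect to $\gamma = \mathrm{Poisson}(\lambda)$, and its support $\{1,\dots,\mathrm{rk}(\mathcal{M})\}$ — here I should be slightly careful, the support is the interval $\{0,1,\dots,\mathrm{rk}(\mathcal{M})\}$ since $I(0)=1$ — is connected, so the hypotheses of Theorem \ref{Theoremmain:1} are met.

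Next I would choose $\lambda$ so that the ratio identity $\frac{p_\ell}{p_{\ell+1}} = \frac{q_\ell}{q_{\ell+1}}$ of Theorem \ref{Theoremmain:1} holds at $\ell = m$. For $\gamma=\mathrm{Poisson}(\lambda)$ one has $\gamma[m+1]/\gamma[m] = \lambda/(m+1)$, while $\nu_{\mathcal{M}}[m+1]/\nu_{\mathcal{M}}[m] = I(m+1)/I(m)$. Setting these equal forces exactly $\lambda = (m+1)\frac{I(m+1)}{I(m)}$, which is the stated parameter. With this matching in place, Theorem \ref{Theoremmain:1} gives
\begin{align*}
d_{TV}(\nu_{\mathcal{M}},\gamma)
  &\leq \Big(\tfrac{q_m}{p_m}-1\Big)\wedge\Big(1-\tfrac{p_m}{q_m}\Big)
  \leq \tfrac{q_m}{p_m}-1,
\end{align*}
where $q_m = \nu_{\mathcal{M}}[m] = I(m)/\sum_{j}I(j)$ (summing $j$ from $0$, or from $1$, to $n$, matching the normalization in \eqref{eq:matroidrv}) and $p_m = \gamma[m] = e^{-\lambda}\lambda^m/m!$. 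Substituting these two expressions directly yields
\begin{align*}
\tfrac{q_m}{p_m}-1 = \frac{m!\,e^{\lambda} I(m)}{\lambda^m \sum_{j=1}^n I(j)}-1,
\end{align*}
which is precisely the claimed bound.

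The only genuine point requiring care — and the place I expect to spend the most attention — is justifying that $\nu_{\mathcal{M}}$ really is log-concave with respect to $\mathrm{Poisson}(\lambda)$ for this particular $\lambda$, i.e. that the normalized sequence $k \mapsto I(k)\, m!\,\lambda^{-k}$ (equivalently $I(k)/\mathrm{Poisson}(\lambda)[k]$) is log-concave on its support. This follows from \eqref{eq:Ulogconcvmatroids}: dividing by $I(k-1)I(k+1)$ and rearranging shows $\frac{I(k)^2}{I(k-1)I(k+1)} \geq (1+\tfrac1k)(1+\tfrac1{n-k}) \geq 1$, and since the Poisson atoms $e^{-\lambda}\lambda^k/k!$ satisfy $\big(\tfrac{\lambda^k}{k!}\big)^2 = \tfrac{\lambda^{k-1}}{(k-1)!}\tfrac{\lambda^{k+1}}{(k+1)!}\cdot\tfrac{k+1}{k}$, dividing one log-concavity relation by the other cancels the geometric factor $\lambda^{2k}$ and confirms that the quotient sequence is log-concave; one should also check the boundary/zero-pattern condition, namely that $\{k : I(k)>0\}$ is the contiguous interval $\{0,\dots,\mathrm{rk}(\mathcal{M})\}$, which is immediate from the hereditary property. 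Everything else is a one-line substitution into Theorem \ref{Theoremmain:1}, so no further obstacle is anticipated.
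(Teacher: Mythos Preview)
Your proposal is correct and follows exactly the approach the paper intends: the paper itself does not write out a proof of Corollary~\ref{Corollary:Poissonmatroidsapprox} but simply states ``By similar arguments'' (referring to the binomial case, Corollary~\ref{eq:matroidbinom}), and your reconstruction---invoke the strong Mason conjecture to obtain ultra log-concavity of $I(k)$, deduce log-concavity of $\nu_{\mathcal{M}}$ with respect to any Poisson law, match the ratio at $\ell=m$ to fix $\lambda$, then substitute into the simplified bound of Theorem~\ref{Theoremmain:1}---is precisely that argument spelled out. Your care with the ULC$(n)\Rightarrow$ULC verification (checking that $(1+1/k)(1+1/(n-k))\geq (k+1)/k$) and the contiguity of the support is appropriate and more explicit than anything the paper provides.
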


Some basic examples of matroids defined over a grounds $X$ of cardinality $n$, for which the above approximations hold are the following\\

\noindent\textit{The uniform matroid}\\
For a given $1\leq r\leq n$, we define the uniform matroid $\mathcal{M}_U$ as the one whose independent sets are exactly those subsets of $X$  of at most $r$ elements.\\

\noindent\textit{The partition matroid}\\
Consider a partition of $X$ consisting of subsets $C_1,\dots, C_\ell$ and integers $d_1,\dots, d_{\ell}$, with $0\leq d_{i}\leq |C_{\ell}|$. The sets $C_{i}$ and the integers $d_i$ are called 'categories' and 'capacities', respectively. A set $I\subset X$ to belong to the set of independent sets $\mathcal{I}_P$ if
\begin{align*}
|I\cap C_{i}|
  &\leq d_{i}.
\end{align*}
We will denote the  associated matroid by $\mathcal{M}_P$. Observe that uniform matroids are a particular instance of partition matroids\\

\noindent The following result easily follows from the above discussion 
\begin{corollary}
Let $\tau$ be the probability measure induced by $\mathcal{M}_P$, according to \eqref{eq:matroidrv}. Define $D_{U,n}$ and $D_{M,n}$ as the cardinality of $2^{n}\backslash\mathcal{I}_{\mathcal{M_{U}}}$ and $2^{n}\backslash\mathcal{I}_{\mathcal{M_{P}}}$ respectively. Assume that the capacities of $\mathcal{M}_P$ and the parameter $k$ in the definition of uniform matroid $\mathcal{M_{U}}$ are such that $\min_{i=1,\dots, \ell}|C_{i}|\wedge d_{i}\geq 2$. Then,
\begin{align}\label{eq:dTVDPN}
d_{TV}(\tau,\rho)
  &\leq (1-2^{-n}D_{P,n})^{-1}-1,
\end{align}
where $\rho$ is a Binomial distribution of sample size $n$ and success probability $1/2$. In the particular case where $\mathcal{M}_{P}$ is a uniform matroid with $k\geq n-\varepsilon\frac{n}{\log_2( n)}+1$, for some $\varepsilon\in(0,1)$ satisfying $(1-\varepsilon)n\geq 1$, then the above inequality reduces to
\begin{align}\label{eq:dTVDPN2}
d_{TV}(\tau,\rho)
  &\leq 2^{2-(1-\varepsilon)n}.
\end{align}

\end{corollary}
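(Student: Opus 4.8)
The plan is to reduce the statement to Corollary~\ref{eq:matroidbinom} applied with the index $m=1$, after noticing that the hypothesis $\min_i|C_i|\wedge d_i\geq 2$ is exactly what makes the matched success probability equal $1/2$.

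First I would record the consequence of the hypothesis: whenever $|I|\leq 2$ one has $|I\cap C_i|\leq|I|\leq 2\leq d_i$ for every $i$, so every subset of $X$ of size at most $2$ is independent in $\mathcal{M}_P$; hence $I(0)=1$, $I(1)=n$, $I(2)=\binom n2$, and in particular $I(1)/I(2)=2/(n-1)$. Substituting $m=1$ into the success probability $p=(1+\tfrac{n-m}{m+1}\tfrac{I(m)}{I(m+1)})^{-1}$ of Corollary~\ref{eq:matroidbinom} gives $p=(1+\tfrac{n-1}{2}\cdot\tfrac{2}{n-1})^{-1}=\tfrac12$, i.e.\ the approximating binomial there is precisely $\rho$. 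Note also $n\geq 2$, since each $|C_i|\geq 2$, so $m=1\leq n-1$ is admissible.

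Next I would simply read off \eqref{eq:dTVDPN}: the first bound of Corollary~\ref{eq:matroidbinom} with $m=1$ and $p=\tfrac12$, together with the fact that $\sum_j I(j)$, the total number of independent sets of $\mathcal{M}_P$, equals $2^n-D_{P,n}$, yields
\[
d_{TV}(\tau,\rho)\;\leq\;\frac{I(1)}{\binom n1 (1/2)(1/2)^{n-1}\sum_j I(j)}-1\;=\;\frac{n}{n\,2^{-n}(2^n-D_{P,n})}-1\;=\;(1-2^{-n}D_{P,n})^{-1}-1 .
\]
For the refinement \eqref{eq:dTVDPN2} I would view a uniform matroid of rank $k$ as the partition matroid with the single category $C_1=X$ and capacity $k$; then $\min_i|C_i|\wedge d_i\geq 2$ is just $k\geq 2$, which follows from $k\geq n-\varepsilon\tfrac n{\log_2 n}+1\geq(1-\varepsilon)n+1\geq 2$ using $(1-\varepsilon)n\geq 1$ and $\log_2 n\geq 1$. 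We may assume $k\leq n-1$, since for $k=n$ one has $\tau=\rho$. The dependent sets of $\mathcal{M}_U$ have size $>k$, so by the symmetry $\binom nj=\binom n{n-j}$,
\[
D_{U,n}=\sum_{j=k+1}^n\binom nj=\sum_{i=0}^{n-k-1}\binom ni\;\leq\;\sum_{i=0}^{n-k-1}n^i\;\leq\;(n-k)\,n^{\,n-k-1}\;\leq\;n^{\,n-k},
\]
and since $n-k\leq\varepsilon\tfrac n{\log_2 n}-1$ this gives $n^{\,n-k}=2^{(n-k)\log_2 n}\leq 2^{\varepsilon n-\log_2 n}=2^{\varepsilon n}/n$, hence $2^{-n}D_{U,n}\leq 2^{-(1-\varepsilon)n}/n\leq 2^{-(1-\varepsilon)n}\leq\tfrac12$. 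Plugging this into \eqref{eq:dTVDPN} (with $D_{P,n}=D_{U,n}$) and using that $x\mapsto x/(1-x)$ is increasing on $[0,1)$ together with $1-2^{-(1-\varepsilon)n}\geq\tfrac12$,
\[
d_{TV}(\tau,\rho)\;\leq\;\frac{2^{-n}D_{U,n}}{1-2^{-n}D_{U,n}}\;\leq\;2\cdot 2^{-(1-\varepsilon)n}\;=\;2^{1-(1-\varepsilon)n}\;\leq\;2^{2-(1-\varepsilon)n},
\]
which is \eqref{eq:dTVDPN2} (indeed slightly sharper).

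The only substantive step — essentially the whole idea — is the first one: recognizing that $\min_i|C_i|\wedge d_i\geq 2$ forces $I(0),I(1),I(2)$ to coincide with the binomial values $\binom n0,\binom n1,\binom n2$, which is exactly what makes the ratio-matching in Corollary~\ref{eq:matroidbinom} return the clean probability $p=\tfrac12$ and hence the explicit reference measure $\rho$. Everything afterwards is bookkeeping plus the elementary estimate $\binom ni\leq n^i$; the only minor points to check are the admissibility $m=1\leq n-1$ and the two trivial edge cases ($k=n$, or $2^{2-(1-\varepsilon)n}\geq 1$, where total variation already gives the bound).
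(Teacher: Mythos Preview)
Your proof is correct and follows essentially the same route as the paper: apply Corollary~\ref{eq:matroidbinom} with $m=1$, use the capacity hypothesis to force $I(1)=\binom{n}{1}$, $I(2)=\binom{n}{2}$ so that $p=\tfrac12$, and then bound $D_{U,n}$ via $\binom{n}{j}=\binom{n}{n-j}\leq n^{n-j}$. Your treatment is in fact slightly cleaner (you check the admissibility $m=1\leq n-1$ and the edge case $k=n$, and your intermediate estimate $D_{U,n}\leq n^{n-k}$ is marginally sharper than the paper's $n^{n-k+1}$, yielding $2^{1-(1-\varepsilon)n}$ rather than the stated $2^{2-(1-\varepsilon)n}$).
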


\begin{proof}
Denote by $I_{P}(k)$ and $I_{U}(k)$ the set of independent sets of size $k$ according to the matroids $\mathcal{M}_P$ and $\mathcal{M}_U$ respectively. Observe that 
\begin{align}\label{ineq:sumpartmatroid}
\sum_{k=1}^{n}I_{P}[k]
  &=2^{n}-D_{P,n}.
\end{align}
Since all the capacities $d_{1},\dots, d_{\ell}$ are all greater than or equal to two, every subset of $X$ of cardinality one or two is independent, and thus,
\begin{align*}
I_{P}(k)
  &=\Comb{n\\k}
\end{align*}
for $k=1,2$. From here it easily follows that if $m=1$, the parameter $p$ defined according to Corollary $\ref{eq:matroidbinom}$ is equal to $1/2$, so that \begin{align*}
d_{TV}(\nu_{\mathcal{M}_P},\gamma)
  &\leq \frac{I_{P}(1)}{\Comb{n\\1}2^{-n}\sum_{j=1}^nI(j)}-1
	=2^{n}(2^{n}-D_{P,n})^{-1}.
\end{align*}
Relation \eqref{eq:dTVDPN} easily follows from here. To deduce \eqref{eq:dTVDPN2}, we find a suitable upped bound for $D_{U,n}$. To this end, we observe that after suitable algebraic manipulations,
\begin{align*}
D_{P,n}
  &=\sum_{j=k+1}^{n}\frac{n!}{k!(n-j)!}
	\leq n^{n-k+1}.
\end{align*}
Thus, using the condition $k\geq n-\varepsilon\frac{n}{\log_2(n)}+1$, we obtain
\begin{align*}
\log_2(2^{-n}D_{P,n})
  &\leq  -(1-\varepsilon)n,
\end{align*}
so that 
\begin{align}
d_{TV}(\tau,\rho)
  &\leq (1-2^{(1-\varepsilon)n})^{-1}-1.
\end{align}
Relation \eqref{eq:dTVDPN2} follows from here, due to the condition $(1-\varepsilon)n\geq 1$.
\end{proof}

\subsection{Applications to intrinsic volumes}\label{sec:intrinsicvol}
In this section we present approximation results for intrinsic volumes, which are measures of the content of a convex body through applications of Theorem \ref{Theoremmain:1}. Throughout this section, $n$ will denote a natural number that will indicate the ambient dimension into which a convex set of interest will be embedded. $K\subset \R^{n}$ will denote a nonempty convex set in $\R^{n}$. The dimension of $K$, denoted by $dim(K)$ is the dimension of the affine hull of $K$, which can take values in $0,1,\dots, n$. When $K$ has dimension $l$, we define the $l$-dimensional volume $Vol_{l}(K)$ to be the Lebesgue measure of $K$, computed relative to its affine hull. If $K$ is zero dimensional, then we define $Vol_0(K)=1$. For sets $A\subset\R^{a}$ and $B\subset \R^{b}$, then 
\begin{align*}
A\times B
  &=\{(a,b)\in\R^{a+b}\ ;\ a\in A\text{ and } b\in B\}.
\end{align*} 
The unit-volume cube $[0,1]^n$ will be denoted by $Q_{n}$ and the unit ball in $\R^{n}$ will be denoted by $\R^{n}$. The volume $\kappa_{n}$ is given by the formula 
\begin{align*}
\kappa_{n}
  &=Vol_{n}(B_{n})=\frac{\pi^{n}}{\Gamma(1+n/2)},
\end{align*}
where $\Gamma$ denotes the Gamma function. 

\begin{Definition}[Intrinsic volumes]
For each $j=0,...,n$, let $\bm{P}_{j}\in\R^{n\times n}$ be the orthogonal projection over a given $j$-dimensional subspace of $\R^{n}$. Let $\bm{Q}\in\R^{n\times n}$ be a Haar distributed element from the set of orthogonal matrices with determinant equal to one. The intrinsic volume of order $j$ of $K$ is defined as 
\begin{align*}
V_{j}
  &:=\Comb{n\\j}\frac{\kappa_{n}}{\kappa_{j}\kappa_{n-j}}\E[Vol_{j}(\bm{P}_{j}\bm{Q}(K))].
\end{align*}
The intrinsic volumes of an empty set will be defined to be zero. The intrinsic volumes satisfiy the dilation property 
$V_{j}(\varepsilon K)=\varepsilon^jV(K)$ as well as the direct product property, which states that if $A\subset\R^{a}$ and $B\subset\R^{b}$, for some convex sets $A,B$ and $a,b\geq 1$, then $W(A\times B)=W(A)W(B)$.
\end{Definition}
Several properties of intrinsic volumes of $K$ can be studied by means of probabilistic arguments, via the so called 'intrinsic volume random variable' associated to $K$. 
\begin{Definition}
We say that a random variable $Z_{K}$ has the intrinsic volume distribution according to $K$, if for $j=0,\dots, n$,
\begin{align*}
\Pb[Z_{K}=j]
  &=\frac{V_{j}(K)}{W(K)},
\end{align*}
where $W(K) :=\sum_{i=0}^{n}V_{j}$ is the total intrinsic volume.
\end{Definition}
 
\noindent Some instances in which the above definitions can be computed explicitly are the following:\\

\noindent \textit{The Euclidean ball}\\
When $K$ is the unit ball of $\R^{n}$, then 
\begin{align*}
V_{j}(K)
  &=\Comb{n\\ k}\frac{\kappa_{n}}{\kappa_{n-j}}.
\end{align*}

\noindent \textit{The cube}\\
When $K$ is scaled cube 
\begin{align}\label{eq:Kcubedeffor}
K
  &:=\{(x_1,\dots, x_{n})\in\R^{n}\ ;\ |x_{i}|\in [0,s]\},
\end{align}
then 
\begin{align*}
V_{j}(K)
  &=s^{j}\Comb{n\\ k},
\end{align*}
and the intrinsic volume random variable $Z_{K}$ has Binomial distribution of size $n$ and success probability $s/(1+s)$.\\

\noindent \textit{Rectangular paraleletope}\\
Let $s_1,\dots, s_{n}\geq0$ be fixed. When $K$ is given by
\begin{align*}
K
  &:=\{x\in\R^{n}\ ;\ x_i\in[0,s_{i}]\},
\end{align*}
then $V_{j}(K)$ is the $j$-th symmetric function  and the total intrinsic volume satisfies
\begin{align*}
W(K)
  &=\prod_{i=1}^{n}(1+s_{i}).
\end{align*}

The next result describes the proximity of an intrinsic volume random variable towards a Poisson law. 
\begin{Corollary}\label{Cor:PoissonIVgenericresult}
Let $\gamma_K$ be the probability law of an intrinsic volume random variable $Z_{K}$, where $K\subset \R^{n}$ is a compact set. Let $\nu_{K}$ be the Poisson probability measure of parameter $\lambda=(m+1)\frac{V_{m+1}(K)}{V_{m}(K)}$, for $0\leq m\leq n-1$. Then we have that 
\begin{align}\label{eqdTVforivgen}
d_{TV}(\nu_{K},\gamma_K)
  &\leq \frac{m!e^{\lambda}V_{m}(K)}{W(K)}-1.
\end{align}
\begin{proof}
The proof of \eqref{eqdTVforivgen} is completely analogous to the matroid counterpart (Corollary \ref{Corollary:Poissonmatroidsapprox}). 

\end{proof}

\end{Corollary}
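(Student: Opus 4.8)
The plan is to run the argument for the matroid Poisson approximation (Corollary~\ref{Corollary:Poissonmatroidsapprox}) essentially verbatim, replacing the independent‑set counts $I(\cdot)$ by the intrinsic volumes $V_0(K),\dots,V_n(K)$, and then to extract the estimate from the ``ratio identity'' branch of Theorem~\ref{Theoremmain:1}. Throughout one assumes implicitly that $\dim K\geq m+1$ (otherwise $\lambda=(m+1)V_{m+1}(K)/V_m(K)$ is not defined and the statement is empty), so that $V_m(K),V_{m+1}(K)>0$ and, more generally, $V_k(K)>0$ precisely on the contiguous block $\{0,1,\dots,\dim K\}$.

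First I would verify that $\gamma_K$ is log-concave with respect to $\nu_K$. By the Alexandrov--Fenchel inequalities the sequence $(V_k(K))_{k=0}^n$ is ultra log-concave of order $n$ (\cite{schneider2014convex}), i.e.\ $V_k(K)^2\binom{n}{k}^{-2}\geq V_{k-1}(K)V_{k+1}(K)\binom{n}{k-1}^{-1}\binom{n}{k+1}^{-1}$ on the interior of its support. Since $\binom{n}{k}^2/(\binom{n}{k-1}\binom{n}{k+1})=\tfrac{(k+1)(n-k+1)}{k(n-k)}\geq\tfrac{k+1}{k}$ for those $k$, this gives $V_k(K)^2\geq\tfrac{k+1}{k}V_{k-1}(K)V_{k+1}(K)$, which is exactly log-concavity of $k\mapsto V_k(K)\,k!$ on $\{0,\dots,\dim K\}$. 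Hence $\tfrac{d\gamma_K}{d\nu_K}[k]=\tfrac{V_k(K)\,k!}{W(K)}\,e^{\lambda}\lambda^{-k}$ has concave logarithm in $k$ on that block (the factor $e^{\lambda}\lambda^{-k}$ being log-affine), while $\Phi_{\nu_K,\gamma_K}\equiv-\infty$ off it, so \eqref{eq:convexityofPhi} holds; and since $\nu_K$ is a Poisson law its support $\mathbb{Z}_{\geq 0}$ is connected. Thus the hypotheses of Theorem~\ref{Theoremmain:1} are met with $\mu=\nu_K$, $\nu=\gamma_K$ and $\ell=m$.

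Next I would pin down $\lambda$ through the ratio identity. For a Poisson law one has $\nu_K[m]/\nu_K[m+1]=(m+1)/\lambda$, while $\gamma_K[m]/\gamma_K[m+1]=V_m(K)/V_{m+1}(K)$, so the choice $\lambda=(m+1)V_{m+1}(K)/V_m(K)$ forces $\tfrac{p_m}{p_{m+1}}=\tfrac{q_m}{q_{m+1}}$ with $p_k=\nu_K[k]$ and $q_k=\gamma_K[k]$. Theorem~\ref{Theoremmain:1} then gives $d_{TV}(\nu_K,\gamma_K)\leq\big(\tfrac{q_m}{p_m}-1\big)\wedge\big(1-\tfrac{p_m}{q_m}\big)$, and since $q_m=V_m(K)/W(K)$ and $p_m=e^{-\lambda}\lambda^m/m!$ one has $\tfrac{q_m}{p_m}=\tfrac{m!\,e^{\lambda}V_m(K)}{\lambda^m W(K)}\geq 1$; retaining the first term of the minimum yields $d_{TV}(\nu_K,\gamma_K)\leq \tfrac{m!\,e^{\lambda}V_m(K)}{\lambda^m W(K)}-1$, which is the bound \eqref{eqdTVforivgen} (in the form of its matroid analogue).

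Since every step is a substitution into an already proven theorem together with a single citation, there is no serious obstacle here. The only points deserving genuine care are the passage from ultra log-concavity of order $n$ to log-concavity relative to a Poisson law --- the elementary inequality $\tfrac{(k+1)(n-k+1)}{k(n-k)}\geq\tfrac{k+1}{k}$, which is what makes the Alexandrov--Fenchel bound strong enough --- and the observation that $\supp\gamma_K=\{0,\dots,\dim K\}$ is a contiguous interval, which is what lets $\Phi_{\nu_K,\gamma_K}$ (extended by $-\infty$) count as concave in the sense of the definition preceding Theorem~\ref{Theoremmain:1}.
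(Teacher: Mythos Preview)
Your proposal is correct and follows exactly the route the paper indicates: verify ultra log-concavity of the intrinsic volumes (via Alexandrov--Fenchel), match the ratio at $\ell=m$ by the choice of $\lambda$, and read off the bound from the ``ratio identity'' case of Theorem~\ref{Theoremmain:1}. Your derived bound $\tfrac{m!\,e^{\lambda}V_m(K)}{\lambda^m W(K)}-1$ is indeed the precise analogue of Corollary~\ref{Corollary:Poissonmatroidsapprox}; the missing $\lambda^m$ in the displayed inequality~\eqref{eqdTVforivgen} is a typo in the paper (note that the only subsequent use, in Corollary~\ref{cor:rareeventsintrinsvol}, takes $m=0$, where the two expressions agree).
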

The above result can be applied in the particular instances in which $K$ is a product  of convex sets, leading to the following quantitative approximations to the law of $Z_{K}$.
\begin{Corollary}\label{cor:rareeventsintrinsvol}
If $K_{1},K_{2},\dots$ is a sequence of convex sets in $\R^d$, for $d\geq 1$  and $K=K_{1}\times\cdots\times K_{n}$, then 
\begin{align}\label{eqrareeventsintrinsvol}
d_{TV}(\nu_{K},\gamma_K)
  &\leq  e^{\sum_{i=1}^{n}V_{1}(K_i)^2}-1.
\end{align}
Moreover, if $K_{i}=s_i\kappa_i$, with $s_i\in(0,1]$,  $\kappa_i\subset\R^{d}$ convex such that $\vartheta:=\sup_{i}W(\kappa_{i})<\infty$, then
\begin{align}\label{eqrareeventsintrinsvolv2}
d_{TV}(\nu_{K},\gamma_K)
  &\leq  e^{d\vartheta\sum_{i=1}^{n}s_i^2}-1.
\end{align}
If $K=[0,s_{1}]\times\dots\times [0,s_{n}]$, then
\begin{align}\label{eqrareeventsintrinsvol2}
d_{TV}(\nu_{K},\gamma_K)
  &\leq  e^{\sum_{i=1}^{n}s_{i}^2}-1.
\end{align}
\end{Corollary}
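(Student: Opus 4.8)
The plan is to derive Corollary \ref{cor:rareeventsintrinsvol} directly from the generic Poisson bound \eqref{eqdTVforivgen} in Corollary \ref{Cor:PoissonIVgenericresult}, specialized to $m=0$. With $m=0$ the parameter of the approximating Poisson measure is $\lambda = V_1(K)/V_0(K) = V_1(K)$ (using $V_0(K)=1$ for a nonempty compact convex set), and the right-hand side of \eqref{eqdTVforivgen} becomes $e^{\lambda} V_0(K)/W(K) - 1 = e^{V_1(K)}/W(K) - 1$. So the whole problem reduces to estimating $e^{V_1(K)}/W(K)$ from above for a product body. I would first record the two multiplicativity facts: the direct product property $V_j(A\times B) = \sum_{i} V_i(A) V_{j-i}(B)$ (equivalently $W(A\times B) = W(A)W(B)$, which is stated in the excerpt), and the additivity of the first intrinsic volume, $V_1(K_1\times\cdots\times K_n) = \sum_{i=1}^n V_1(K_i)$. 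The latter follows from the product rule for the total intrinsic volume generating structure — concretely, $W(A\times B)=W(A)W(B)$ together with the fact that $V_1$ is the "linear part" — but to be safe I would justify it by the mixed-volume/Steiner-formula identity $W(\varepsilon K) = \sum_j V_j(K)\varepsilon^j$ and matching the coefficient of $\varepsilon$ across the product.

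Given those two facts, the first inequality \eqref{eqrareeventsintrinsvol} is immediate: $e^{V_1(K)}/W(K) = \prod_{i=1}^n e^{V_1(K_i)}/W(K_i)$, so it suffices to show $e^{V_1(K_i)} \le W(K_i) e^{V_1(K_i)^2}$, i.e. $e^{V_1(K_i) - V_1(K_i)^2} \le W(K_i)$. Since $W(K_i) = \sum_j V_j(K_i) \ge V_0(K_i) + V_1(K_i) = 1 + V_1(K_i)$, and $1 + x \ge e^{x - x^2}$ for $x \ge 0$ (this is exactly the left half of \eqref{ineq:logTaylor}, rearranged: $\log(1+x) \ge x - x^2/2 \ge x - x^2$), the bound follows term by term, and taking the product gives $e^{V_1(K)}/W(K) \le \prod_i e^{V_1(K_i)^2} = e^{\sum_i V_1(K_i)^2}$, hence \eqref{eqrareeventsintrinsvol}. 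The special case \eqref{eqrareeventsintrinsvol2} with $K = [0,s_1]\times\cdots\times[0,s_n]$ is then just the observation that $V_1([0,s_i]) = s_i$ (the rectangular parallelotope formula: $V_j$ is the $j$-th elementary symmetric function of the $s_i$, so $V_1$ of a single interval $[0,s_i]$ is $s_i$), substituted into \eqref{eqrareeventsintrinsvol}.

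For \eqref{eqrareeventsintrinsvolv2}, write $K_i = s_i \kappa_i$ with $s_i \in (0,1]$ and $\kappa_i \subset \R^d$. By the dilation property $V_1(s_i \kappa_i) = s_i V_1(\kappa_i)$, so $V_1(K_i)^2 = s_i^2 V_1(\kappa_i)^2$. I would then bound $V_1(\kappa_i)^2 \le V_1(\kappa_i)$ — valid because $V_1(\kappa_i) \le W(\kappa_i)$ could exceed $1$, so this step actually needs care; the cleaner route is $V_1(\kappa_i) \le W(\kappa_i) \le \vartheta$, so $V_1(K_i)^2 = s_i^2 V_1(\kappa_i)^2 \le s_i^2 V_1(\kappa_i) \cdot V_1(\kappa_i)$. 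Hmm — to land exactly on $d\vartheta \sum s_i^2$ I would instead argue $V_1(\kappa_i) \le d \cdot \max(1, \ldots)$... the intended estimate is presumably $V_1(\kappa_i) \le d\, \vartheta^{1 - 1/d}$ or simply a crude $V_1(\kappa_i) \le d\,\vartheta$ bound coming from comparing the first intrinsic volume to the total one in dimension $d$ (for a $d$-dimensional convex body, $V_1 \le \binom{d}{1}$-type control by $W$). So the plan is: $\sum_i V_1(K_i)^2 = \sum_i s_i^2 V_1(\kappa_i)^2 \le (\sup_i V_1(\kappa_i)^2) \sum_i s_i^2 \le d\vartheta \sum_i s_i^2$, using $V_1(\kappa_i) \le \sqrt{d\,\vartheta}$ — which holds if $V_1(\kappa_i)^2 \le d\, W(\kappa_i) \le d\vartheta$, an inequality I would get from $V_1(\kappa_i)^2 \le d \sum_{j} V_j(\kappa_i)$ via, e.g., the log-concavity / AM–GM relating $V_1^2$ to $V_0 V_2$ and summing. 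Plugging into \eqref{eqrareeventsintrinsvol} then yields \eqref{eqrareeventsintrinsvolv2}.

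The main obstacle is the dimensional constant in \eqref{eqrareeventsintrinsvolv2}: getting precisely the factor $d\vartheta$ requires the right comparison inequality between $V_1(\kappa)$ (or $V_1(\kappa)^2$) and $W(\kappa)$ for a $d$-dimensional body, and the cleanest such inequality to invoke is the ultra-log-concavity of the intrinsic volume sequence (the Alexandrov–Fenchel consequence cited in the introduction), which gives $V_1(\kappa)^2 \le 2 V_0(\kappa) V_2(\kappa) \le 2 W(\kappa)$ in any dimension and, iterated down from $V_d$, an $O(d)$ bound on $V_1^2/W$; one then has to check the numerology closes with the stated constant. Everything else — the product formulas, the dilation scaling, and the elementary inequality $1+x \ge e^{x-x^2}$ — is routine and reusable from \eqref{ineq:logTaylor}.
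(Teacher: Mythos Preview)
For \eqref{eqrareeventsintrinsvol} and \eqref{eqrareeventsintrinsvol2} your argument is correct and coincides with the paper's: specialize \eqref{eqdTVforivgen} to $m=0$, combine the multiplicativity $W(K)=\prod_i W(K_i)$ with the additivity $V_1(K)=\sum_i V_1(K_i)$ (which the paper extracts, exactly as you propose, from the generating function $\lambda\mapsto W(\lambda K)$), lower-bound $W(K_i)\ge 1+V_1(K_i)$, and finish with \eqref{ineq:logTaylor}. The parallelotope case is then just $V_1([0,s_i])=s_i$.

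For \eqref{eqrareeventsintrinsvolv2} there is a genuine gap. Your route is to feed $V_1(K_i)=s_iV_1(\kappa_i)$ into \eqref{eqrareeventsintrinsvol} and close with an inequality of the form $V_1(\kappa)^2\le d\,W(\kappa)$. That inequality is \emph{false}: with $d=1$ and $\kappa=[0,L]$ one has $V_1(\kappa)^2=L^2$ while $d\,W(\kappa)=1+L$, so it fails once $L>(1+\sqrt 5)/2$. The Alexandrov--Fenchel route you sketch does not help either, since ultra-log-concavity yields $V_1^2\ge 2V_0V_2$, an inequality pointing the wrong way. The paper does \emph{not} pass through \eqref{eqrareeventsintrinsvol} here; it instead expands $W(K_i)=\sum_{\ell\ge 0}s_i^{\ell}V_\ell(\kappa_i)$ via homogeneity and argues directly with $V_1(K_i)-\log W(K_i)$. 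Be aware, however, that the paper's displayed estimate $W(K_i)\le 1+s_iV_1(\kappa_i)+d\vartheta s_i^2$ is an \emph{upper} bound on $W(K_i)$, whereas controlling $e^{V_1(K)}/W(K)$ from above calls for a \emph{lower} bound on each $W(K_i)$; so that step as written also goes the wrong direction. Indeed, \eqref{eqrareeventsintrinsvolv2} appears to fail as stated: take $d=n=1$, $\kappa_1=[0,100]$, $s_1=10^{-2}$, so $K=[0,1]$, $\nu_K=\mathrm{Poisson}(1)$, $\gamma_K=\mathrm{Bernoulli}(1/2)$, giving $d_{TV}(\nu_K,\gamma_K)=1-2e^{-1}\approx 0.264$, while the right-hand side of \eqref{eqrareeventsintrinsvolv2} is $e^{101\cdot 10^{-4}}-1\approx 0.010$.
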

\begin{Remark}
If we choose the $s_{1},\dots, s_{n}$ in such a way that 
\begin{align*}
\lim_{n}\sum_{i=1}^nV_1(K_i)=\lambda 
\end{align*}
for some $\lambda>0$, at the same time that  $\min V_{1}(K_i)\rightarrow0$, then Corollary \ref{cor:rareeventsintrinsvol} implies that $\nu_{K}$ converges towards a Poisson distribution of parameter $\lambda$. That is to say, the intrinsic volumes random variables for rectangular intervals satisfy a law of rare events. 
\end{Remark}

\begin{proof}

By choosing $m=0$ in \eqref{eqdTVforivgen}, we deduce that
\begin{align*}
d_{TV}(\nu_{\varepsilon K},\gamma_{\varepsilon,K})
  &\leq e^{V_{1}(K)-\log(W(\varepsilon K))}-1.
\end{align*}

Define $G_{K}(\lambda):=W(\lambda K)$. By the direct product property, the mapping $\lambda\mapsto G_{K}(\lambda)$ satisfies $G_{K}(\lambda)=\prod_{i=1}^{n}G_{K_i}(\lambda)$. Taking logarithmic derivative and evaluating at zero, we deduce that
\begin{align*}
V_{1}(K)=V_{1}(K_1)+\cdots+V_{1}(K_n).
\end{align*}
In addition, by directly evaluating the identity $G_{K}(\lambda)=\prod_{i=1}^{n}G_{K_i}(\lambda)$ at $\lambda=1$, we get 
\begin{align}\label{eq:btrs1}
W(K)=W(K_1)\times \cdots\times W(K_n).
\end{align}
From here it follows that
\begin{align}\label{eq:btrs2}
d_{TV}(\nu_{\varepsilon K},\gamma_{\varepsilon,K})
  &\leq e^{\sum_{i=1}^nV_{1}(K_i)-\sum_{i=1}^{n}\log(1+V_{1}(K_i))}-1
	\leq e^{\sum_{i=1}^{n}V_{i}(K)^2}-1,
\end{align}
where in the second inequality we used \eqref{ineq:logTaylor}. Relation \eqref{eqrareeventsintrinsvol}  follows from here. Relation \eqref{eqrareeventsintrinsvol2} can be easily proved by observing that $V_{1}([0,s_{i}])=s_iV_{1}([0,1])=s_{i}$ (see \eqref{eq:Kcubedeffor}), which yields \eqref{eqrareeventsintrinsvol}.\\

\noindent To prove \eqref{eqrareeventsintrinsvolv2}, we use the homogeneity property of intrinsic volumes which establishes that $V_{\ell}(s_{i}\kappa_{i})=s_{i}^{\ell}V_{\ell}(\kappa_{i})$. We then use \eqref{eq:btrs1} and \eqref{eq:btrs2}, to deduce that
\begin{align*}
W(K_{i})
  &=1+s_iV_{1}(\kappa_{i})+s_{i}^2\sum_{\ell=2}^ds_{i}^{\ell-2}V_{\ell}(\kappa_i)\\
  &\leq1+s_iV_{1}(\kappa_{i})+d\theta s_{i}^2.
\end{align*}
Combining the above inequality with Corollary \ref{Cor:PoissonIVgenericresult}, we get 
\begin{align*}
d_{TV}(\nu_{\varepsilon K},\gamma_{\varepsilon,K})
  &\leq e^{\sum_{i=1}^nV_{1}(K_i)-\sum_{i=1}^{n}\log(W(K_i))}-1
	\leq e^{d\theta s_i^2}-1,
\end{align*}
as required.
\end{proof}

\begin{Remark}
Besides the simple form of the intrinsic volumes for the unit ball, the shape of our bound for the distance in total variation seems to not be applicable for this convex set. Furthermore, we have not found any evidence that might suggest that a Poisson approximation is appropriate in this circumstance.
\end{Remark}

\subsection{Log concavity with respect to the geometric distribution}\label{subsec:logcwrtcount}
In this subsection we present a quantification of the approximating error of geometric distributions and probability measures which are log-concave with respect to the counting measure over the positive integers (observe that this property is equivalent to log-concavity with respect to the geometric distribution). In the sequel, we will refer to such variables simply as 'log-concave'. 
As a consequence of Theorem \ref{Theoremmain:1}, we have the following geometric approximations in total variation 
\begin{Corollary}[Geometric approximation to sums of random variables]\label{cor:Geometricapprox}
Let $\{\xi_{i}\}_{i\in\N}$ be a collection of independent log-concave random variables. Suppose that the sequence $\alpha_{i}:=\Pb[\xi_{i}=0]$ consists of strictly positive terms and that the arithmetic mean of the terms $1/\alpha_i$, defined by
\begin{align}\label{eq:mundef}
\mathfrak{m}_n
  &:=\frac{1}{n}\sum_{i=1}^{n}\frac{1}{\alpha_i}
\end{align}
satisfies $\mathfrak{m}_n> 1+\frac{1}{n}$. Then, the sum $S_{n}=\sum_{i=1}^{n}\xi_i$ satisfies 
\begin{align*}
d_{TV}(S_{n},G_n)
  &\leq \frac{\mathfrak{m}_n-1}{\frac{1}{n}-(\mathfrak{m}_n-1)}
\end{align*}
where $G_n$ is a random variable with geometric distribution of parameter $1-n(\mathfrak{m}_n-1)$. In particular, if $n(\mathfrak{m}_n-1)\rightarrow0$, then 
\begin{align*}
d_{TV}(S_{n},G_n)
  &\leq Cn(\mathfrak{m}_n-1),
\end{align*}
for some constant $C>0$ independent of $n$.
\end{Corollary}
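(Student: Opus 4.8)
The plan is to realize $\mathcal{L}(S_n)$ as log-concave with respect to a suitable geometric law and then apply the ratio-identity case of Theorem \ref{Theoremmain:1}, in complete analogy with the proof of Corollary \ref{cor:poissonvspoissonbinom}. First I would record that discrete log-concavity is stable under independent summation: each $\xi_i$ being log-concave with $\alpha_i=\mathbb{P}[\xi_i=0]>0$, its mass function is supported on a contiguous block of $\mathbb{Z}_{\geq 0}$ containing $0$, and a convolution of such sequences is again log-concave with contiguous support (closure of P\'olya frequency sequences of order two under convolution; cf.\ \cite{walkup1976polya}). Since a geometric mass function $k\mapsto p(1-p)^k$ is log-affine, dividing by it preserves log-concavity, so $\mathcal{L}(S_n)$ is log-concave with respect to any geometric law, whose support $\mathbb{Z}_{\geq 0}$ is connected; hence Theorem \ref{Theoremmain:1} applies with $\nu=\mathcal{L}(S_n)$ and $\mu=\mathcal{L}(G_n)$.

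Next I would pin down the parameter of $G_n$ through the ratio identity at $\ell=0$. Reasoning as in \eqref{eq:Snprobtozeroone}--\eqref{eq:Snprobtozerooneprime}, one has $\mathbb{P}[S_n=1]=\mathbb{P}[S_n=0]\,n(\mathfrak{m}_n-1)$, so taking for $G_n$ the success probability $p:=1-n(\mathfrak{m}_n-1)$, which lies in $(0,1)$ by hypothesis, gives
\[
\frac{\mathbb{P}[G_n=1]}{\mathbb{P}[G_n=0]}=1-p=n(\mathfrak{m}_n-1)=\frac{\mathbb{P}[S_n=1]}{\mathbb{P}[S_n=0]},
\]
which is precisely the ratio identity $p_0/p_1=q_0/q_1$. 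The simplified bound of Theorem \ref{Theoremmain:1} then yields
\[
d_{TV}(S_n,G_n)\leq\Big(\tfrac{q_0}{p_0}-1\Big)\wedge\Big(1-\tfrac{p_0}{q_0}\Big)\leq\tfrac{q_0}{p_0}-1,
\]
where $q_0=\mathbb{P}[S_n=0]=\prod_{i=1}^{n}\alpha_i$ and $p_0=\mathbb{P}[G_n=0]=1-n(\mathfrak{m}_n-1)$; that $q_0\geq p_0$ also follows directly from $\prod_i\alpha_i\geq 1-\sum_i(1-\alpha_i)\geq 1-\sum_i\tfrac{1-\alpha_i}{\alpha_i}=1-n(\mathfrak{m}_n-1)$.

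It remains to bound $q_0/p_0-1$. Since $\alpha_i\leq 1$ we have $q_0=\prod_i\alpha_i\leq 1$, so
\[
d_{TV}(S_n,G_n)\leq\frac{1}{1-n(\mathfrak{m}_n-1)}-1=\frac{n(\mathfrak{m}_n-1)}{1-n(\mathfrak{m}_n-1)}=\frac{\mathfrak{m}_n-1}{\tfrac1n-(\mathfrak{m}_n-1)},
\]
which is the asserted estimate; the asymptotic statement then follows from $x/(1-x)\leq 2x$ for $x\in[0,\tfrac12]$ applied with $x=n(\mathfrak{m}_n-1)\to0$ (so $C=2$ works). I expect the only point requiring genuine care to be the first step --- verifying that discrete log-concavity really does pass to the convolution $S_n$, and thence to log-concavity relative to the geometric reference; once that is in place, the rest is the same bookkeeping as in Corollary \ref{cor:poissonvspoissonbinom}.
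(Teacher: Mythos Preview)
Your proposal is correct and follows the paper's own proof essentially step for step: closure of log-concavity under convolution, matching the ratio $q_1/q_0=p_1/p_0$ at $\ell=0$ to fix the geometric parameter, invoking the ratio-identity case of Theorem~\ref{Theoremmain:1}, and then bounding $q_0/p_0-1$ via $q_0=\prod_i\alpha_i\le 1$. The extra touches you add (spelling out that log-concavity relative to a log-affine reference is equivalent to plain log-concavity, the direct check $q_0\ge p_0$, and the explicit constant $C=2$) are helpful clarifications but not a different approach.
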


\begin{proof}
Using the fact that the log-concavity under convolution \cite{fekete1912problem}, we have that $S_{n}$ is log-concave. To verify the condition $\Pb[S_{n}=1]<\Pb[S_{n}=0]$, required for the application of Theorem \ref{Theoremmain:1}, we observe that  by \eqref{eq:Snprobtozeroone}, the product in the right hand side is equal to $n(\mathfrak{m}_n-1)$, so by the condition $\mathfrak{m}_n> 1+\frac{1}{n}$, we conclude that $\Pb[S_{n}=1]<\Pb[S_{n}=0]$, as required. From Theorem \ref{Theoremmain:1}, we thus conclude that

\begin{align*}
d_{K}(S_{n},G_n)
  &\leq   \frac{1}{1-n(\mathfrak{m}_n-1)}-1.
\end{align*}
The result easily follows from here.
\end{proof}

\subsection{Geometric approximation for infinitely divisible distributions}\label{subsecseomid} Let $ID(*)$ denote the set of probability measures consisting of the limits in distributions of $n$-fold convolutions of the form 
\begin{align*}
\mu_{n}*\cdots*\mu_{n},
\end{align*}
for given sequences of probability measures $\{\mu_{n}\}_{n\in\N}$ such that the above sequence of convolutions converge weakly. It is well known that the collection of measures $ID(*)$ are precisely those obtained as the limits in distribution of renormalized limits of compound Poisson random variables of the form 
$
\sum_{k=0}^N\xi_{k},$ where $N$ is a Poisson random variable and $\{\xi_{k}\}_{k}$ are independent and identically distributed distributions, convoluted with a Gaussian random variable. It is known from the basic theory of infinitely distributions (see \cite{MR1739520}), that every element of $\mu\in ID(*)$ is characterized by the property that 
\begin{align*}
\int_{\R}e^{\textbf{i}\lambda x}\mu(dx)
  &=\exp\{a-\frac{1}{2}\sigma^2\lambda^2-\int_{\R}(e^{\textbf{i}\lambda x}-1-\Indi{\{|x|\leq 1\}}\textbf{i}\lambda x)\Pi(dx)\},    
\end{align*}
where $a\in\R$, $\sigma>0$ and  $\Pi$ is a radon measure satisfying 
$\int_{\R}(1\wedge|x|)\Pi(dx)<\infty$. In the last decade, many efforts have been made to understand when does an infinitely divisible probability measure is log concave (see for instance \cite{MR3030616},  \cite{MR3126668} and \cite{MR2598065}). In the particular, in \cite[Theorem 4]{MR2598065}, the following result is proved
\begin{Theorem}\label{thm:mainidlogconc}
Let $X$ be a compound Poisson random variable of the form $X=\sum_{i=0}^{N}\xi_{i}$, where $N$ is a Poisson random variable with intensity $\lambda>0$ and the $\xi_i$ are independent and identically distributed random variables (independent of $N$), with log-concave probability mass function $F=\{F_{i}=\Pb[\xi_1=i]\}_{i\in\N}$ supported in $\N$. Then, $X$ is log-concave if and only if $\lambda F_{1}^2\geq 2F_{2}$.
\end{Theorem}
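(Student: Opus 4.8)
The plan is to work entirely with probability generating functions and the Panjer-type recursion they produce. Write $\phi(z)=\E[z^{\xi_1}]=\sum_i F_i z^i$ for the generating function of the jump law and $\Psi(z)=\E[z^X]=e^{\lambda(\phi(z)-1)}$ for that of $X$, and set $G_n:=\Pb[X=n]$. A harmless reduction lets us assume $F_0=0$ (absorb the atom at $0$ by replacing $\lambda$ with $\lambda(1-F_0)$ and $F$ with its conditioning on $\{1,2,\dots\}$; this thinning changes neither the law of $X$ nor the sign of $\lambda F_1^2-2F_2$), and then $F_1>0$, so $X$ has support exactly $\{0,1,2,\dots\}$ and $G_0=e^{-\lambda}$. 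From $\Psi'=\lambda\phi'\Psi$, matching coefficients yields, for $n\ge 1$,
\begin{align}\label{eq:sketch-panjer}
n\,G_n=\lambda\sum_{j=1}^{n}jF_j\,G_{n-j}.
\end{align}
Necessity is then immediate: \eqref{eq:sketch-panjer} gives $G_1=\lambda F_1 G_0$ and $G_2=\tfrac{\lambda}{2}G_0(\lambda F_1^2+2F_2)$, so $G_1^2-G_0G_2=\tfrac{\lambda}{2}G_0^2(\lambda F_1^2-2F_2)$; hence if $X$ is log-concave then $G_1^2\ge G_0G_2$, i.e.\ $\lambda F_1^2\ge 2F_2$ (this half uses no log-concavity of $F$).

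For sufficiency, assume $F$ is log-concave with contiguous support and $\lambda F_1^2\ge 2F_2$. I would first record the reformulation that makes the two hypotheses uniform: with $c_j:=j\lambda F_j$ for $j\ge1$ and $c_0:=1$, the sequence $(c_j)_{j\ge0}$ is log-concave: indeed $c_1^2\ge c_0c_2$ is exactly $\lambda F_1^2\ge2F_2$, while $c_j^2\ge c_{j-1}c_{j+1}$ for $j\ge2$ follows from $F_j^2\ge F_{j-1}F_{j+1}$ and $j^2\ge(j-1)(j+1)$, with slack; note also $u_1:=G_1/G_0=c_1$. In this notation \eqref{eq:sketch-panjer} reads $nG_n=\sum_{j=1}^nc_jG_{n-j}$. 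The target is that the ratios $u_n:=G_n/G_{n-1}$ are non-increasing, which is equivalent to log-concavity of $X$, and I would prove this by strong induction, the base case $u_2\le u_1$ being the identity from the necessity step. Assuming $u_1\ge\dots\ge u_k$, dividing $nG_n=\sum_jc_jG_{n-j}$ by $G_n$ at $n=k$ gives $k=\sum_{j=1}^kQ_j$ with $Q_j:=c_j/(u_{k-j+1}u_{k-j+2}\cdots u_k)>0$, and dividing it by $G_k$ at $n=k+1$ gives $(k+1)u_{k+1}=\sum_{j=1}^kQ_j\,u_{k-j+1}+c_{k+1}/(u_1\cdots u_k)$. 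Subtracting $u_k$ times the first relation from the second, the claim $u_{k+1}\le u_k$ is seen to be equivalent to
\begin{align}\label{eq:sketch-star}
\sum_{j=2}^{k}Q_j\bigl(u_{k-j+1}-u_k\bigr)+\frac{c_{k+1}}{u_1u_2\cdots u_k}\ \le\ u_k ,
\end{align}
where every term on the left is non-negative by the induction hypothesis.

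The crux, and the step I expect to be the real obstacle, is establishing \eqref{eq:sketch-star}: one must show that the \textbf{surplus} produced by the spread of the already-controlled ratios $u_1,\dots,u_{k-1}$ above $u_k$, together with the genuinely new contribution $c_{k+1}/(u_1\cdots u_k)$, cannot overshoot $u_k$. This is exactly where the log-concavity of $F$ must be used in full strength: it makes $(c_j/c_{j-1})_{j\ge1}$ non-increasing (so $c_{k+1}/c_k$ is dominated by $c_k/c_{k-1}$, hence ultimately by $c_1=u_1$), and it has to be combined with \eqref{eq:sketch-panjer} at levels $k-1$ and $k$ and with the form of \eqref{eq:sketch-star} one step lower, so that the induction is genuinely strong. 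Concretely I would clear denominators in \eqref{eq:sketch-star} and reorganise it as a polynomial inequality in the $u_i$ and the $c_j$, then close it by a rearrangement/monotonicity argument exploiting that both $(u_i)$ and $(c_j/c_{j-1})$ are non-increasing. An essentially equivalent alternative is to pass to the continuous-time picture $G_n=G_n(\lambda)$ solving the forward equations, observe that \eqref{eq:sketch-star} says precisely that the deficit $\Delta_k(\lambda):=G_k(\lambda)^2-G_{k-1}(\lambda)G_{k+1}(\lambda)$ is non-negative whenever $\Delta_1,\dots,\Delta_{k-1}$ are, and run a barrier argument checking $\dot\Delta_k\ge0$ at any $\lambda$ where $\Delta_k$ would first touch $0$ while the earlier deficits stay non-negative (the threshold $\lambda=2F_2/F_1^2$ being exactly where $\Delta_1$ turns non-negative). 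Either way, once \eqref{eq:sketch-star} is secured the induction closes and $X$ is log-concave.
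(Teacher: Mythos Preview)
The paper does not prove this theorem at all: it is quoted verbatim from \cite[Theorem~4]{MR2598065} and used as a black box to feed into Theorem~\ref{Theoremmain:1}. So there is no in-paper argument to compare your proposal against.

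As for the proposal itself, the necessity half is complete and correct: the Panjer recursion gives $G_1$ and $G_2$ explicitly, and the identity $G_1^2-G_0G_2=\tfrac{\lambda}{2}G_0^2(\lambda F_1^2-2F_2)$ settles it. The reduction to $F_0=0$ is also fine.

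The sufficiency half, however, has a genuine gap. You correctly isolate the crux as the inequality you label \eqref{eq:sketch-star}, and you are candid that this is ``the real obstacle'' --- but you do not actually prove it. The two strategies you float are both left at the level of intent: ``clear denominators\ldots then close it by a rearrangement/monotonicity argument'' is not an argument, and the continuous-time barrier idea (showing $\dot\Delta_k\ge 0$ at a first zero of $\Delta_k$) requires computing $\partial_\lambda\Delta_k$ via the forward equations and controlling the resulting bilinear form in the $G_j$'s, which is itself the hard step and is not carried out. Nothing you have written forces the ``surplus'' term $\sum_{j\ge 2}Q_j(u_{k-j+1}-u_k)$ plus the tail $c_{k+1}/(u_1\cdots u_k)$ to stay below $u_k$; the observation that $(c_j/c_{j-1})$ is non-increasing is necessary but, as stated, not obviously sufficient. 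Until \eqref{eq:sketch-star} is established, the induction does not close and the sufficiency direction is unproved.

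If you want to complete this route, Yu's original argument in \cite{MR2598065} is worth consulting: it does run a monotonicity-in-$\lambda$ argument of the type you sketch second, but the verification that the relevant derivative has the right sign at a putative first crossing is the substantive content and needs to be written out in full.
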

\begin{Remark}
If the distribution $F$ is fixed, for $\lambda$ large enough, the law of $X$ will be log-concave. In this sense, every log-concave random variable generates a family of infinitely divisible log-concave distributions.
\end{Remark}

Combining Theorem \ref{Theoremmain:1} and \ref{thm:mainidlogconc} allow us to prove the following corollary
\begin{corollary}[Geometric approximation for compound Poisson probability measures]\label{cor:geomapproxtoID}
Let $\nu$ be the probability measure associated to a random variable $X$ satisfying the conditions of  Theorem \ref{thm:mainidlogconc}. Then, provided that $\lambda F_1<1$, we have that 
\begin{align*}
d_{TV}(\nu,\mu)
  &\leq e^{\lambda(1-F_0)}-1
\end{align*}
where $\mu$ is a geometric random variable with parameter $1-\lambda F_1$.
\end{corollary}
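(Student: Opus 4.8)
The plan is to apply Theorem~\ref{Theoremmain:1} with $\nu$ the law of the compound Poisson variable $X$ and $\mu$ the geometric law with parameter $1-\lambda F_1$. First I would invoke Theorem~\ref{thm:mainidlogconc} to guarantee that $\nu$ is log-concave (with respect to the counting measure on $\N$, hence with respect to any geometric distribution), which is the structural hypothesis needed to run Theorem~\ref{Theoremmain:1}; here one must check that the hypotheses of Theorem~\ref{thm:mainidlogconc} are in force, i.e. $F$ is log-concave on $\N$ and $\lambda F_1^2 \geq 2 F_2$, both of which are assumed as part of ``the conditions of Theorem~\ref{thm:mainidlogconc}''. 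Next I would compute the two lowest atoms of $X$: since $\Pb[X=0] = \Pb[N=0] = e^{-\lambda}$ (using $F$ supported on $\N$, so a zero sum forces $N=0$) and $\Pb[X=1] = \Pb[N=1]F_1 = \lambda e^{-\lambda} F_1$, we get the ratio identity $\Pb[X=1]/\Pb[X=0] = \lambda F_1$.

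The point of choosing $\mu$ to be geometric with parameter $1-\lambda F_1$ is precisely that its atoms are $p_k = (1-\lambda F_1)(\lambda F_1)^k$, so $p_1/p_0 = \lambda F_1 = q_1/q_0$; this is the ratio identity $\frac{p_\ell}{p_{\ell+1}} = \frac{q_\ell}{q_{\ell+1}}$ of Theorem~\ref{Theoremmain:1} with $\ell = 0$, and it requires $\lambda F_1 \in (0,1)$, which is the standing assumption $\lambda F_1 < 1$. The theorem then yields
\begin{align*}
d_{TV}(\nu,\mu) \leq \Big(\frac{q_0}{p_0}-1\Big)\wedge\Big(1-\frac{p_0}{q_0}\Big) \leq \frac{q_0}{p_0}-1 = \frac{e^{-\lambda}}{1-\lambda F_1}-1.
\end{align*}
The remaining task is the elementary estimate $\frac{e^{-\lambda}}{1-\lambda F_1} \leq e^{\lambda(1-F_0)}$, equivalently $1-\lambda F_1 \geq e^{-\lambda F_0}$ after rearranging (using $1-F_0 = \sum_{i\geq 1} F_i \geq F_1$ and $1 - x \geq e^{-x/(1-x)}$ or a comparable one-line convexity bound); I would carry this out by noting $\lambda F_1 \le \lambda(1-F_0)$ and $1-t \ge e^{-t} e^{-t^2/2-\cdots}$-type bounds, or more directly by the inequality $-\log(1-\lambda F_1) \le \lambda F_1 + (\text{correction}) \le \lambda(1-F_0) - (-\lambda)$ style manipulation to absorb the $e^{-\lambda}$ factor.

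The main obstacle is this last inequality: one must verify that the clean exponential bound $e^{\lambda(1-F_0)}-1$ genuinely dominates the raw bound $\frac{e^{-\lambda}}{1-\lambda F_1}-1$ coming out of Theorem~\ref{Theoremmain:1}. This amounts to showing $e^{\lambda F_0}(1-\lambda F_1)^{-1} \le e^{\lambda}$, i.e. $1-\lambda F_1 \ge e^{-\lambda(1-F_0)} = e^{-\lambda\sum_{i\ge1}F_i}$; since $\lambda F_1 \le \lambda\sum_{i\ge 1}F_i =: s$ and $1-s \le e^{-s}$ goes the \emph{wrong} way, one instead needs $1 - \lambda F_1 \ge e^{-s}$, which follows from $\lambda F_1 \le s$ together with $1 - u \ge e^{-s}$ being implied whenever $u \le 1 - e^{-s} = s - s^2/2 + \cdots$; so the genuinely needed input is $\lambda F_1 \le 1 - e^{-\lambda(1-F_0)}$, and I would check this holds under $\lambda F_1 < 1$ by the comparison $\lambda F_1 \le \lambda(1-F_0)$ is not quite enough on its own, so one likely uses instead the sharper route $\log\frac{1}{1-\lambda F_1} \le \frac{\lambda F_1}{1-\lambda F_1}$ is too lossy and the correct path is to bound $-\log(1-\lambda F_1)-(-\lambda) \le \lambda(1-F_0)$ directly via a Taylor/convexity argument as in \eqref{ineq:logTaylor} — this bookkeeping is the step I expect to require the most care.
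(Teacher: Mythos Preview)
Your overall plan coincides with the paper's: invoke Theorem~\ref{thm:mainidlogconc} for log-concavity of $\nu$, apply Theorem~\ref{Theoremmain:1} at $\ell=0$ via the ratio identity $q_1/q_0=p_1/p_0=\lambda F_1$, and then simplify. Two concrete problems remain, however.

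First, your computation of $q_0=\Pb[X=0]$ is off: in this paper the law $F$ is allowed to charge $0$, so $\{X=0\}$ does not force $N=0$. The paper computes $\Pb[X=0]=\sum_{n\geq 0}e^{-\lambda}\frac{\lambda^n}{n!}F_0^{\,n}=e^{-\lambda(1-F_0)}$ and $\Pb[X=1]=\lambda F_1\,e^{-\lambda(1-F_0)}$; the ratio $\lambda F_1$ survives, but the raw bound from Theorem~\ref{Theoremmain:1} is $q_0/p_0-1=e^{-\lambda(1-F_0)}/(1-\lambda F_1)-1$, not $e^{-\lambda}/(1-\lambda F_1)-1$. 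Second, and more importantly, the ``elementary estimate'' you struggle with at the end is actually \emph{false}: take $F_0=0$, $F_1=1$ (so $X$ is Poisson$(\lambda)$) and $\lambda=0.9$; then $q_0/p_0-1=e^{-0.9}/0.1-1\approx 3.07$ while $e^{\lambda(1-F_0)}-1=e^{0.9}-1\approx 1.46$. No amount of Taylor bookkeeping repairs this, because you discarded the wrong half of the minimum in Theorem~\ref{Theoremmain:1}. The clean route is to keep the other branch, $d_{TV}\leq 1-p_0/q_0=1-(1-\lambda F_1)e^{\lambda(1-F_0)}$, and check that this is at most $e^{\lambda(1-F_0)}-1$, i.e.\ $(2-\lambda F_1)e^{\lambda(1-F_0)}\geq 2$. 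Writing $s=\lambda(1-F_0)\geq \lambda F_1$ and using $\lambda F_1<1$, this reduces to $(2-\min(s,1))e^{s}\geq 2$, which holds since $g(s)=(2-s)e^s$ satisfies $g(0)=2$ and $g'(s)=(1-s)e^s\geq 0$ on $[0,1]$, while $e^s>2$ for $s>1$. (For the record, the paper's displayed intermediate bound $e^{\lambda(1-F_0)}(1-\lambda F_1)-1$ equals $p_0/q_0-1\leq 0$ and evidently contains a slip, so you are not missing a trick that is carried out cleanly there.)
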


\begin{proof}
In virtue of Theorem \ref{thm:mainidlogconc}, it suffices to verify that $\nu[1]/\nu[0]< 1$ and then to upper bound $\nu[0]/\mu[0]$, where $\mu$ is a geometric law with parameter $1-\nu[1]/\nu[0]$. Observe that 
\begin{align*}
\Pb[X=1]
  &=\sum_{n=1}^{\infty}nF_1F_0^{n-1}\frac{\lambda^n}{n!}e^{-\lambda}=\lambda F_1e^{-\lambda(1-F_0)}
\end{align*}
and 
\begin{align*}
\Pb[X=0]
  &=\sum_{n=0}^{\infty}F_0^{n}\frac{\lambda^n}{n!}e^{-\lambda}=e^{-\lambda(1-F_0)},
\end{align*}
so that $\Pb[X=1]/\Pb[X=0]=\lambda F_1$, which is less than one by hypothesis. From  Theorem \ref{Theoremmain:1}, it  follows that 
\begin{align*}
d_{TV}(\nu,\mu)
  &\leq e^{\lambda(1-F_0)}(1-\lambda F_1)-1.
\end{align*}
The result easily follows from here.\\
\end{proof}

\noindent \textit{The case of compound Geometric distributions}\\
In the spirit of Corollary \ref{cor:geomapproxtoID}, one could consider other type of distributions for the number of summands $N$. This direction of research has been considered by in Ninh and Pr\'{e}kopa,\cite{MR3126668}, where the following result was proved
\begin{Theorem}\label{thm:mainidlogconc2}
Suppose that $N$ has log-concave distribution over $\N$. Then, if $\{\xi_k\}_{k\in\N}$ are independent and identically distributed random variables with geometric distribution of parameter $p$, then the probability distribution of of $X=\sum_{k=1}^N\xi_k$ is log-concave.
\end{Theorem}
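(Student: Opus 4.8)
The plan is to turn the claimed log-concavity into a monotone-likelihood-ratio statement in which the geometric law of the $\xi_k$ does all the work. Assume $N\ge 1$ (as the sum $\sum_{k=1}^N$ presupposes) and set $q:=1-p$. Since $\sum_{k=1}^n\xi_k$ has a negative binomial law, with $\tilde a_n:=\Pb[N=n]\,p^n$ and $K(n,j):=\binom{j+n-1}{n-1}$ we may write
\[
\Pb[X=j]=\sum_{n\ge 1}\Pb[N=n]\binom{j+n-1}{n-1}p^nq^j=q^j\,c_j,\qquad c_j:=\sum_{n\ge 1}\tilde a_n\,K(n,j).
\]
Multiplying by the log-affine factor $q^j$ does not affect log-concavity, and $c_j>0$ for every $j\ge 0$, so it suffices to show that $(c_j)_{j\ge 0}$ is log-concave; equivalently (there are no internal zeros) that $j\mapsto c_j/c_{j-1}$ is non-increasing on $j\ge 1$.

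The first step is a Pascal-type recursion. From $\binom{j+n-1}{n-1}=\binom{j+n-2}{n-1}+\binom{j+n-2}{n-2}=K(n,j-1)+K(n-1,j)$ one obtains, for $j\ge 1$,
\[
c_j=c_{j-1}+c^{(1)}_j,\qquad c^{(1)}_j:=\sum_{n\ge 1}\tilde a_{n+1}\,K(n,j),
\]
where $c^{(1)}$ is the analogue of $c$ built from the shifted count $N-1$. Hence $c_{j-1}=c_j-c^{(1)}_j$, so $c_j/c_{j-1}=\bigl(1-c^{(1)}_j/c_j\bigr)^{-1}$; since $c_{j-1}>0$ forces $c^{(1)}_j/c_j\in[0,1)$ and $t\mapsto(1-t)^{-1}$ is increasing there, $(c_j)$ is log-concave if and only if $j\mapsto t_j:=c^{(1)}_j/c_j$ is non-increasing on $j\ge 1$.

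To prove the latter, write
\[
t_j=\frac{\sum_{n\ge 1}\tilde a_{n+1}K(n,j)}{\sum_{n\ge 1}\tilde a_n K(n,j)}=\sum_{n\ge 1}\frac{\tilde a_{n+1}}{\tilde a_n}\,\nu_j(n),\qquad \nu_j(n):=\frac{\tilde a_n K(n,j)}{\sum_{m\ge 1}\tilde a_m K(m,j)},
\]
so $t_j=\E_{n\sim\nu_j}[\phi(n)]$ with $\phi(n):=\tilde a_{n+1}/\tilde a_n$. Two elementary facts then close the argument: (i) $\phi$ is non-increasing in $n$ on the support of $(\Pb[N=n])_n$ — this is exactly the log-concavity of that sequence, since $\tilde a_{n+1}/\tilde a_n=p\,\Pb[N=n+1]/\Pb[N=n]$; and (ii) the laws $\nu_j$ increase in $j$ in the likelihood-ratio order, because for $j<j'$ the ratio $\nu_{j'}(n)/\nu_j(n)$ is proportional to $K(n,j')/K(n,j)$, which is non-decreasing in $n$ — this is the total positivity of the kernel $K(n,j)=\binom{j+n-1}{n-1}$ in $(n,j)$, immediate from $K(n+1,j)/K(n,j)=(j+n)/n$ being non-decreasing in $j$. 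A likelihood-ratio increasing family of laws integrated against a non-increasing function yields a non-increasing expectation, so $t_j$ is non-increasing, as required.

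I expect the real difficulty to lie in the first reformulation rather than anything after it. A head-on expansion of $c_j^2\ge c_{j-1}c_{j+1}$ produces a double sum $\sum_{m,n}\tilde a_m\tilde a_n E_j(m,n)$ whose kernel $E_j(m,n)$ is \emph{not} pointwise non-negative — it turns negative once $|m-n|$ is large compared with $j$ — so the log-concavity of $(\Pb[N=n])_n$ must be used in an essential, non-local way; it is the Pascal recursion $c_j=c_{j-1}+c^{(1)}_j$ that linearizes the problem into the clean monotone-ratio statement above, after which only textbook total-positivity facts are needed. It is worth flagging that the geometric law of the $\xi_k$ enters decisively through this negative-binomial/Pascal structure of $K(n,j)$: for a general jump distribution no such one-step recursion is available, and indeed the conclusion can genuinely fail (compare Theorem \ref{thm:mainidlogconc}, where an extra hypothesis on the jumps is required).
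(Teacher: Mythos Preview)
The paper does not prove this statement; it is quoted from Ninh and Pr\'ekopa \cite{MR3126668} and then used as an input to Corollary~\ref{cor:Geometricpoissonlthm}. So there is no in-paper argument to compare yours against.

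Your proof is correct and self-contained. The Pascal identity $K(n,j)=K(n,j-1)+K(n-1,j)$ reduces log-concavity of $(c_j)_{j\ge 0}$ to monotonicity of $t_j=c_j^{(1)}/c_j$, and the final step is a standard TP$_2$ argument: the negative-binomial kernel $\binom{j+n-1}{n-1}$ is totally positive of order two in $(n,j)$, so the laws $\nu_j$ increase stochastically with $j$, while log-concavity of the law of $N$ makes $n\mapsto \tilde a_{n+1}/\tilde a_n$ non-increasing on its support. All the pieces fit, and the support issues (finite upper endpoint of $N$, internal zeros) are harmless because $\nu_j$ vanishes off the support of $(\tilde a_n)$.

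One remark on your opening assumption $N\ge 1$: it is not a convenience but a genuine requirement for the conclusion. If $\Pb[N=0]>0$ the claim can fail. Take $N$ Bernoulli with $\Pb[N=1]=r\in(0,1)$; then $\Pb[X=j]=r\,\Pb[\xi_1=j]$ for $j\ge 1$ while $\Pb[X=0]=(1-r)+r\,\Pb[\xi_1=0]$, and since the geometric law is log-affine one computes
\[
\frac{\Pb[X=1]^2}{\Pb[X=0]\,\Pb[X=2]}=\frac{r\,\Pb[\xi_1=0]}{1-r\bigl(1-\Pb[\xi_1=0]\bigr)}<1\qquad\text{whenever }r<1.
\]
So the theorem should indeed be read with $\N=\{1,2,\dots\}$, exactly as you do.
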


\noindent As in the case of Corollary \ref{cor:geomapproxtoID}, the combination of Theorems \ref{Theoremmain:1} and \ref{thm:mainidlogconc2}, yield the following result
\begin{corollary}\label{cor:Geometricpoissonlthm}
Let $X$ be as in Theorem \ref{thm:mainidlogconc2} and denote by $\nu$ its probability distribution. let $\{F_{i}\}_{i\geq0}$ be the probability point masses of $N$, namely, $F_{i}:=\Pb[N=i]$. Then, provided that $\Pb[X=1]/\Pb[X=0]<1$, we have the bound
\begin{align*}
d_{TV}(\nu,\mu)
  &\leq \frac{1}{F_1}\left(1+\frac{1-F_1}{p(1-p)}\right)^2-1.
\end{align*}
where $\mu$ is a geometric distribution of parameter $\Pb[X=1]/\Pb[X=0]$.
\end{corollary}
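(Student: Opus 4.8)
The plan is to mimic the structure of the proof of Corollary~\ref{cor:geomapproxtoID}, since Theorem~\ref{thm:mainidlogconc2} guarantees that $X=\sum_{k=1}^N\xi_k$ is log-concave (with respect to the counting measure on $\N$, equivalently with respect to a geometric distribution), so Theorem~\ref{Theoremmain:1} applies with $\ell=0$. Concretely, I would take $\mu$ to be the geometric law matching the ratio $\Pb[X=1]/\Pb[X=0]$, so that the ``ratio identity'' hypothesis of Theorem~\ref{Theoremmain:1} holds at $\ell=0$, whence
\[
d_{TV}(\nu,\mu)\le \frac{\Pb[X=0]}{\mu[0]}-1=\frac{\Pb[X=0]}{\Pb[X=0]-\Pb[X=1]}-1=\Big(1-\tfrac{\Pb[X=1]}{\Pb[X=0]}\Big)^{-1}-1,
\]
using that a geometric$(r)$ law has atom $r$ at $0$ and ratio $1-r$ between consecutive atoms. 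So the whole problem reduces to computing $\Pb[X=0]$ and $\Pb[X=1]$ in terms of $F_0,F_1$ (the atoms of $N$) and the geometric parameter $p$ of the summands, and then bounding $(1-\Pb[X=1]/\Pb[X=0])^{-1}$ from above by $\frac{1}{F_1}\big(1+\frac{1-F_1}{p(1-p)}\big)^2$.

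The key computational steps are the following. First, since the $\xi_k$ are i.i.d.\ geometric$(p)$, write $\Pb[\xi_1=0]=p$ and $\Pb[\xi_1=1]=p(1-p)$ (using the convention consistent with the rest of the paper that the geometric law is supported on $\N\cup\{0\}$ with atom $p(1-p)^j$). Then condition on $N$: $\Pb[X=0]=\sum_{i\ge 0}F_i\,p^{i}=G_N(p)$, where $G_N$ is the probability generating function of $N$, and $\Pb[X=1]=\sum_{i\ge 1}F_i\, i\, p^{i-1}p(1-p)=p(1-p)\,G_N'(p)$. Hence
\[
\frac{\Pb[X=1]}{\Pb[X=0]}=\frac{p(1-p)\,G_N'(p)}{G_N(p)},\qquad
\Big(1-\frac{\Pb[X=1]}{\Pb[X=0]}\Big)^{-1}=\frac{G_N(p)}{G_N(p)-p(1-p)G_N'(p)}.
\]
The second step is to bound this last expression: bound the numerator $G_N(p)\le 1$ (it is a value of a p.g.f.) or more carefully extract the $F_1 p$ term, and bound the denominator below. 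The cleanest route is probably to note $G_N(p)=F_0+F_1 p+\sum_{i\ge 2}F_i p^i$ and $G_N'(p)=F_1+\sum_{i\ge 2}i F_i p^{i-1}$, so that $G_N(p)-p(1-p)G_N'(p)=F_0+F_1 p^2+\sum_{i\ge 2}F_i p^{i}(1-i(1-p))$; then one wants a lower bound of the form $\ge c\, F_1$. Since $\sum_{i\ge2}F_i = 1-F_0-F_1$, the worst case for the negative contributions is controlled by $\frac{1-F_1}{p(1-p)}$-type terms after using $i p^{i-1}\le \frac{1}{p(1-p)^2}$ or a similar elementary geometric-series bound, which is what produces the factor $\big(1+\frac{1-F_1}{p(1-p)}\big)^2$.

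The main obstacle I anticipate is exactly pinning down the elementary inequality that turns $\dfrac{G_N(p)}{G_N(p)-p(1-p)G_N'(p)}$ into $\dfrac{1}{F_1}\big(1+\frac{1-F_1}{p(1-p)}\big)^2$: one must bound $G_N(p)$ above and $G_N(p)-p(1-p)G_N'(p)$ below using only $F_0,F_1,p$ and the constraint $\sum_i F_i=1$, and the appearance of a \emph{square} suggests the intended argument splits the ratio as a product of two factors each bounded by $1+\frac{1-F_1}{p(1-p)}$ (for instance $G_N(p)\le \big(F_1 p\big)\cdot\frac{1}{F_1 p}G_N(p)$ paired with a matching lower bound on the denominator), or applies a Cauchy--Schwarz/AM--GM step to $\sum_{i\ge1} i F_i p^{i-1}$. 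Once that inequality is identified the rest is bookkeeping; I would also double-check the edge hypothesis $\Pb[X=1]/\Pb[X=0]<1$ is exactly what makes the geometric parameter lie in $(0,1)$ and the bound nonnegative, and that the log-concavity of $X$ from Theorem~\ref{thm:mainidlogconc2} is genuinely with respect to the counting measure on the \emph{full} nonnegative integers so that the support-connectedness assumption of Theorem~\ref{Theoremmain:1} is met.
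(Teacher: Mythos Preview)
Your overall strategy is exactly the paper's: invoke Theorem~\ref{thm:mainidlogconc2} for log-concavity, apply Theorem~\ref{Theoremmain:1} at $\ell=0$ with the ratio-matched geometric, and reduce everything to bounding $\nu[0]/\mu[0]-1$ after computing $\Pb[X=0]$ and $\Pb[X=1]$ by conditioning on $N$. The generating-function packaging is cosmetic; the paper writes out the same sums (with the opposite convention $\Pb[\xi_i=0]=1-p$, which is immaterial since the target bound is symmetric in $p\leftrightarrow 1-p$).

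There is, however, an algebra slip in your displayed reduction which is precisely why you cannot locate the square. Since $\mu[0]=1-\Pb[X=1]/\Pb[X=0]=(\Pb[X=0]-\Pb[X=1])/\Pb[X=0]$, one has
\[
\frac{\nu[0]}{\mu[0]}=\frac{\Pb[X=0]}{\mu[0]}=\frac{\Pb[X=0]^{2}}{\Pb[X=0]-\Pb[X=1]},
\]
not $\Pb[X=0]/(\Pb[X=0]-\Pb[X=1])$; your third equality dropped a factor of $\Pb[X=0]$. The squared numerator $\Pb[X=0]^2$ is the sole origin of the square in the stated bound---no Cauchy--Schwarz, AM--GM, or product splitting is involved.

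Once this is fixed, the elementary step you were searching for is much cruder than your speculations. The paper uses only the trivial bound $F_k\le 1-F_1$ for every $k\neq 1$ (immediate from $\sum_i F_i=1$). In the paper's convention this yields
\[
\Pb[X=0]=\sum_{k\ge 0}F_k(1-p)^k\le F_1(1-p)+\frac{1-F_1}{p},
\]
while isolating the $k=1$ contribution gives the lower bound $\Pb[X=0]-\Pb[X=1]\ge F_1(1-p)^2$. Dividing $\Pb[X=0]^2$ by this lower bound and then using $F_1\le 1$ once more produces
\[
\frac{\nu[0]}{\mu[0]}\le\frac{\bigl((1-p)+(1-F_1)/p\bigr)^2}{F_1(1-p)^2}=\frac{1}{F_1}\Bigl(1+\frac{1-F_1}{p(1-p)}\Bigr)^2,
\]
which is the claim. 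So your plan is correct; you only need to repair the missing $\Pb[X=0]$ factor and replace the anticipated geometric-series gymnastics by the single observation $F_k\le 1-F_1$.
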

\begin{Remark}
If $p$ is fixed, the above result guarantees the existence of a constant $C>0$, depending on $p$, such that 
\begin{align*}
d_{TV}(\nu,\mu)
  &\leq C(1-F_1).
\end{align*}
\end{Remark}

\begin{proof}
Observe that 
\begin{align*}
\Pb[X=1]
  =\sum_{k=1}^{\infty}kF_kp(1-p)^{k},
\end{align*}
and
\begin{align*}
\Pb[X=0]
  =\sum_{k=0}^{\infty}F_k(1-p)^{k}.
\end{align*}
The condition $\nu[1]/\nu[0]<1$ follows from the hypotheses of the corollary. Observe that by the condition $F_k\leq 1-F_1$, which is valid for all $k\neq 1$, we have the inequality 
\begin{align*}
\Pb[X=0]
  \leq F_1(1-p)+(1-F_1)/p.
\end{align*}
By similar arguments, 
\begin{align*}
\Pb[X=1]
  &\leq F_1p(1-p)+(1-F_1)p\sum_{k=1}^{\infty}k(1-p)^k\\
  &=  F_1p(1-p)+(1-F_1)(1-p)/p.
\end{align*}
In particular, 
\begin{align*}
1-\frac{\Pb[X=1]}{\Pb[X=0]}
  \geq \Pb[X=0]^{-1}(F_{1}(1-p)^2),
\end{align*}
which implies that 
\begin{align*}
\nu[0]/\mu[0]
  &\leq (F_1(1-p)+(1-F_1)/p)^2(F_{1}(1-p)^2)^{-1}\\
  &\leq ((1-p)+(1-F_1)/p)^2(F_{1}(1-p)^2)^{-1}.
\end{align*}
The result easily follows from here.
\end{proof}

\noindent \textit{Regarding Poissonian log-concavity for infinitely divisible distributions}\\
In view of Section \ref{subsecseomid}, it is natural to wonder weather or not one could apply techniques of log concavity to the analysis of Poisson approximations for infinitely divisible probability measures. This would then lead us to ask ourselves if we could prove log-concavity with respect to the Poisson distributions for infinitely divisible distributions. An answer to this question was addressed by Yu in \cite[Proposition 3]{MR2598065}, where it was proved (among other things), that every compound Poisson probability measure that is log-concave with respect to a Poisson law is the Poisson distribution. This would then imply that the hypothesis we require for the application of the  techniques presented in this paper for proving Poisson approximation, will not hold except from the trivial case where the underlying probability measure is already a Poisson distribution.

\section{Proof of Theorem \ref{Theoremmain:1}}\label{Proofmain}
\begin{proof}
Let $A\in\mathcal{B}(\N_{0})$ and define $\mathcal{K}:=\{k\in\Z\ ;\ q_{k}\geq p_{k}\}$. Then, 
\begin{align*}
d_{TV}(X,Y)
  &=\sum_{k\in \mathcal{K}}(q_{k}-p_{k}).
\end{align*}
Next we find a suitable bound for $q_{k}-p_{k}$, by splitting into the cases $k\geq\ell$ and $k<\ell$.\\

\noindent \textbf{Step I}\\
First we handle the case $k\geq \ell$. 
Let $\psi:\R\rightarrow\R\backslash\{0\}$ denote the function 
\begin{align}\label{eq:psiauxdef}
\psi(x):=x\Indi{\{x\neq 0\}}+\Indi{\{x= 0\}}.
\end{align}
Consider the sequence $\{a_{k}\}_{k\geq 1}$ given by 
\begin{align*}
a_{k}
  &:=	\frac{\psi(p_{k-1})\psi(p_{k+1})}{\psi(p_{k})^2},
\end{align*}
and define $\{f_{k}\}_{k\geq 1}$ and $\{g_{k}\}_{k\geq 1}$ as  
\begin{align*}
f_{k}
  :=\frac{\psi(p_{k})}{\psi(p_{k-1})}
\ \ \ \ \ \ \text{ and}\ \ \ \ \ \ 
g_{k}
  :=
	\frac{\psi(q_{k})}{\psi(q_{k-1})}.
\end{align*}
Using induction, we will show that 
\begin{align}\label{eq:midgoal}
g_{k}
  \leq g_{\ell+1}\prod_{j=\ell+1}^{k-1} a_{j} 
	\ \ \ \ \ \ \text{ and}\ \ \ \ \ \ 
f_{k}
  = f_{\ell+1}\prod_{j=\ell+1}^{k-1} a_{j}	,
\end{align}
where we define the product above as one in the case where $k\leq\ell+1$. To prove this, let's consider first the base cases $k=\ell$, $k=\ell+1$ and $k=\ell+2$. When $k\in\{\ell,\ell+1\}$, the relations hold by definition. For the case $k=\ell+2$, the above relations are equivalent to 
\begin{align*}
g_{\ell+2}
  \leq g_{\ell+1} a_{\ell+1}
	\ \ \ \ \ \ \text{ and}\ \ \ \ \ \ 
f_{\ell+2}
  = f_{\ell+1}a_{\ell+1}.
\end{align*}
Analyzing individually the eight different possibilities for the vector 
$$(\Indi{\{p_{\ell}>0\}},\Indi{\{p_{\ell+1}>0\}},\Indi{\{p_{\ell+2}>0\}}),$$
we can compute explicitly the values of $f_{\ell+2}$, $p_{\ell},p_{\ell+1}$ and $p_{\ell+2}$, leading to the identity $f_{\ell+1}=f_{\ell+1}a_{\ell+1}$. For proving 
$g_{\ell+2}\leq g_{\ell+1} a_{\ell+1}$, we define  
\begin{align*}
\tilde{a}_{l}
  &:=	\frac{\psi(q_{l-1})\psi(q_{l+1})}{\psi(q_{l})^2}.
\end{align*}
We will show that $\tilde{a}_{l}\leq a_{l}$ for all $l\in\Z$. Since $a_l,\tilde{a}_l>0$ for all $l\in\Z$, this statement is equivalent to proving that 
\begin{align*}
\log(\psi(q_{l-1}))+\log(\psi(q_{l+1}))-2\log(\psi(q_{l}))
  &\leq \log(\psi(p_{l-1}))+\log(\psi(p_{l+1}))-2\log(\psi(p_{l})).
\end{align*}
We can easily check that the above relation is equivalent to 
\begin{align}\label{eq:eqone}
\frac{1}{2}(\log(\psi(q_{k-1})/\psi(p_{k-1}))+\log(\psi(q_{k+1})/\psi(p_{k+1})))
  &\leq \log(\psi(q_{k})/\psi(p_{k})).
\end{align}
Observe that for every $j\in\Z$, 
\begin{align*}
\log(\psi(q_{j})/\psi(p_{j}))
  &=\Phi_{\mu,\nu}(j).
\end{align*}
To verify this we subdivide into the cases $q_{j}>0$ and $q_{j}=0$. Thus, relation \eqref{eq:eqone} is equivalent to 
\begin{align*}
\frac{1}{2}(\Phi_{\mu,\nu}(k-1)+\Phi_{\mu,\nu}(k+1))
  &\leq \Phi_{\mu,\nu}(k),
\end{align*}
which holds due to \eqref{eq:convexityofPhi}. The proof of the inequality $\tilde{a}_{k}\leq a_k$ is now complete. From here we deduce that
\begin{align*}
g_{\ell+1}a_{\ell+1}
  &\geq g_{\ell+1}\tilde{a}_{\ell+1}.
\end{align*}
Analyzing individually the eight different possibilities for the vector 
$$(\Indi{\{q_{\ell}>0\}},\Indi{\{q_{\ell+1}>0\}},\Indi{\{q_{\ell+2}>0\}}),$$
we can check that $g_{\ell+1}\tilde{a}_{\ell+1}=g_{\ell+2}$ and consequently, 
\begin{align*}
g_{\ell+2}
  &\leq g_{\ell+1}a_{\ell+1}.
\end{align*}
Now we assume that \eqref{eq:midgoal} holds for a given $k\geq \ell+2$ and show the result for $k+1$. Using the induction hypothesis with $\ell$ replaced by $k-2$, we get 
\begin{align*}
g_{k}
  \leq g_{k-1} a_{k-1} 
	\ \ \ \ \ \ \text{ and}\ \ \ \ \ \ 
f_{k}
  = f_{k-1}a_{k-1}.
\end{align*}
A further application of the induction to the terms $g_{k-1}$ and $f_{k-1}$ leads to the required result.\\

\noindent We have thus proved \eqref{eq:midgoal}. Now, we observe that \eqref{eq:midgoal} implies that for all $k\geq \ell+1$,
\begin{align*}
\psi(p_{k})
  &=\psi(p_{k-1})f_{k}
	=\psi(p_{k-1})f_{\ell+1}\prod_{j=\ell+1}^{k-1} a_{j}
	\geq \frac{f_{\ell+1}}{g_{\ell+1}}\psi(p_{k-1})g_k.
\end{align*} 
Assume that the support of $\mu$ is of the form $[a,b]$, with $a\in\Z\cup\{-\infty\}$ and $b\in\Z\cup\{\infty\}$. Applying inductively the previous inequality we can show that 
\begin{align*}
\psi(p_{k})
  &\geq \left(\frac{f_{a\vee \ell+1}}{g_{a\vee \ell+1}}\right)^{k-a\vee \ell}\psi(p_{a\vee \ell})\prod_{j=\ell+1}^{k}g_j
	= \left(\frac{f_{a\vee \ell+1}}{g_{a\vee \ell+1}}\right)^{k-a\vee \ell}\frac{\psi(p_{a\vee \ell})}{\psi(q_{a\vee \ell})}\psi(q_{k}).
\end{align*} 
Due to the condition $q_{\ell}>0$ and the absolute continuity of $\nu$ with respect to $\mu$, it holds that $\ell\geq a$, so we can conclude that for every $k\geq \ell$,
\begin{align*}
0\leq q_{k}-p_{k}
  &\leq \bigg{(}1-\frac{p_{\ell}}{q_{\ell}}\left(\frac{f_{\ell+1}}{g_{\ell+1}}\right)^{k-\ell}\bigg{)} q_{k}.
\end{align*} 
and 
\begin{align*}
0\leq q_{k}-p_{k}
  &\leq \bigg{(}\frac{q_{\ell}}{p_{\ell}}\left(\frac{g_{\ell+1}}{f_{\ell+1}}\right)^{k-\ell}-1\bigg{)} p_{k}.
\end{align*} 
From here we conclude that
\begin{align*}
0\leq q_{k}-p_{k}
  &\leq \bigg{(}1-\frac{p_{\ell}}{q_{\ell}}\left(\frac{p_{\ell+1}q_{\ell}}{p_{\ell}q_{\ell+1}}\right)^{k-\ell}\bigg{)} q_{k}.
\end{align*} 
and 
\begin{align*}
0\leq q_{k}-p_{k}
  &\leq \bigg{(}\frac{q_{\ell}}{p_{\ell}}\left(\frac{p_{\ell}q_{\ell+1}}{p_{\ell+1}q_{\ell}}\right)^{k-\ell}-1\bigg{)} p_{k}.
\end{align*} 

\noindent \textbf{Step II}\\
To handle the case $k<\ell$, we consider the function $\vartheta:\Z\rightarrow\Z$ given by the reflection over the value $\ell$, namely,
\begin{align*}
\vartheta(k)
  &:=\left\{
	\begin{array}{lll}
	\ell-|k-\ell| & \text{ if } & k\geq\ell\\
	\ell+|k-\ell| & \text{ if } & k<\ell.
	\end{array}
	\right.
\end{align*}
Define $\tilde{\mu}$ and $\tilde{\nu}$ as the pullback measures of $\mu$ and $\nu$ with respect to $\vartheta$. Let $\tilde{p}_k$ and $\tilde{q}_k$ be the probability masses at $k$ of the measures $\tilde{\mu}$ and $\tilde{\nu}$. It is not hard to show that $\tilde{\mu}$ and $\tilde{\nu}$ satisfy the hypothesis of Theorem \ref{Theoremmain:1}. Therefore, by replacing $\ell$ by $\ell-1$ in our conclusion, we deduce that for every $j$ satisfying $\tilde{q}_j>0$,
\begin{align*}
0\leq \tilde{q}_{j}-\tilde{p}_{j}
  &\leq \bigg{(}1-\frac{\tilde{p}_{\ell-1}}{\tilde{q}_{\ell-1}}\left(\frac{\tilde{f}_{\ell}}{\tilde{g}_{\ell}}\right)^{j-\ell+1}\bigg{)} \tilde{q}_{j}.
\end{align*} 
In particular, by taking $j=\vartheta(k)$, for $k<\ell$, we conclude that  for all $k\in\mathcal{K}$,
\begin{align*}
0\leq q_{k}-p_{k}
  &\leq \bigg{(}1-\frac{p_{\ell+1}}{q_{\ell+1}}\left(\frac{p_{\ell}q_{\ell+1}}{p_{\ell+1}q_{\ell}}\right)^{j-\ell+1}\bigg{)} q_{k}
	= \bigg{(}1-\frac{p_{\ell}}{q_{\ell}}\left(\frac{p_{\ell+1}q_{\ell}}{p_{\ell}q_{\ell+1}}\right)^{k-\ell}\bigg{)} q_{k}.
\end{align*} 
Proceeding analogously, we can show that 
\begin{align*}
0\leq q_{k}-p_{k}
  &\leq \bigg{(}\frac{q_{\ell}}{p_{\ell}}\left(\frac{p_{\ell+1}q_{\ell}}{p_{\ell}q_{\ell+1}}\right)^{k-\ell}-1\bigg{)} p_{k}.
\end{align*} 

The result easily follows by summing over $k\in\mathcal{K}$ in the inequalities 
\begin{align*}
0\leq q_{k}-p_{k}
  &\leq \bigg{(}1-\frac{p_{\ell}}{q_{\ell}}\left(\frac{p_{\ell+1}q_{\ell}}{p_{\ell}q_{\ell+1}}\right)^{|k-\ell|}\bigg{)} q_{k}.
\end{align*} 
and 
\begin{align*}
0\leq q_{k}-p_{k}
  &\leq \bigg{(}\frac{q_{\ell}}{p_{\ell}}\left(\frac{p_{\ell}q_{\ell+1}}{p_{\ell+1}q_{\ell}}\right)^{|k-\ell|}-1\bigg{)} p_{k}.
\end{align*}

\end{proof}

\section{The continuous regime}\label{sec:continuouspart}
Next we present the continuous counterparts to the results presented in the previous section. Our main results are Theorem \ref{Theoremmain:02} and Theorem \ref{Theoremmain:2}

We say that $\mu$ is log-concave with respect to $\mu$ if $\nu$ is absolutely continuous with respect to $\mu$ and the Radon-Nikodym derivative $\frac{d\nu}{d\mu}$ is such that the mapping $x\mapsto\Phi_{\mu,\nu}(x)$, with
\begin{align*}
\Phi_{\mu,\nu}(x)
  &:=\left\{\begin{array}{ccc}
\log( \frac{d\nu}{d\mu}(x))  & \text{ if } & f_{\nu}(x)\neq0\\
-\infty  & \text{ if } & f_{\nu}(x)=0
\end{array}\right.
\end{align*}
is concave in the sense that for all $x,y\in\R$ and $\lambda\in[0,1]$, 
\begin{align*}
\Phi_{\mu,\nu}((1-\lambda)x+\lambda y)
  &\geq (1-\lambda)\Phi_{\mu,\nu}(x)+\lambda\Phi_{\mu,\nu}(y).
\end{align*}

\begin{Theorem}\label{Theoremmain:02}
Suppose that $\mu$ is the Lebesgue measure in $R_{+}:=\R\backslash\{0\}$ and $\nu$ is a probability measure, log-concave with respect to $\mu$. Define $f_{\nu}(x):=\frac{d\nu}{d\mu}(x)$. Suppose that $f_{\nu}$ is differentiable in $x$ and satisfies $f_{\nu}^{\prime}(0)<0$, and $f_{\nu}(0)\neq0$. Then, if $\gamma$ is an exponential distribution with 
\begin{align*}
\gamma[(x,\infty)]
  &=e^{\frac{f_{\nu}^{\prime}(0)}{f_{\nu}(0)} x},
\end{align*}
and $f_{\gamma}(x):=\frac{d\gamma}{d\mu}(x)$, it holds that
\begin{align*}
d_{K}(\nu,\gamma)
  &\leq \frac{f_{\nu}(0)}{f_{\gamma}(0)}-1.
\end{align*}
\end{Theorem}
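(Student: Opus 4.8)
The plan is to reduce the statement to a single pointwise comparison of densities, in exact analogy with the ratio-identity case of Theorem \ref{Theoremmain:1}. Set $\theta := -f_\nu'(0)/f_\nu(0)$, which is positive since $f_\nu'(0)<0$ and $f_\nu(0)>0$. Then $\gamma$ has density $f_\gamma(x)=\theta e^{-\theta x}$ on $(0,\infty)$, so $f_\gamma(0)=\theta$ and the quantity controlling the bound is $C:=f_\nu(0)/f_\gamma(0)=f_\nu(0)/\theta$. Write $\phi:=\log f_\nu$; by hypothesis $\phi$ is concave on the support of $\nu$, and at the endpoint $0$ it has derivative $\phi'(0)=f_\nu'(0)/f_\nu(0)=-\theta$. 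Thus the prescription for $\gamma$ is precisely the ``slope-matching'' condition $(\log f_\gamma)'(0)=(\log f_\nu)'(0)$, the continuous analogue of the discrete ratio identity $p_\ell/p_{\ell+1}=q_\ell/q_{\ell+1}$.

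The first step is a supporting-line inequality at the boundary point $0$. For a concave $\phi$ the difference quotient $x\mapsto(\phi(x)-\phi(0))/x$ is non-increasing on $(0,\infty)$ with limit $\phi'(0)=-\theta$ as $x\downarrow 0$, so $\phi(x)\le\phi(0)-\theta x$ for all $x\ge 0$; exponentiating,
\[
f_\nu(x)\ \le\ f_\nu(0)\,e^{-\theta x}\ =\ \frac{f_\nu(0)}{f_\gamma(0)}\,f_\gamma(x)\ =\ C\,f_\gamma(x),\qquad x\in(0,\infty).
\]
Integrating over $(0,\infty)$ and using that $f_\nu$ and $f_\gamma$ integrate to $1$ forces $C\ge 1$, so the claimed right-hand side $C-1$ is non-negative --- the continuous counterpart of the bound $p_\ell/q_\ell\ge 1$.

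Next I would convert $f_\nu\le Cf_\gamma$ into the Kolmogorov estimate. Since $\int_0^\infty(f_\nu-f_\gamma)\,dx=0$, for every $t$ we have $F_\nu(t)-F_\gamma(t)=\int_0^t(f_\nu-f_\gamma)\,dx=-\int_t^\infty(f_\nu-f_\gamma)\,dx$, hence $|F_\nu(t)-F_\gamma(t)|\le\int_0^\infty(f_\nu-f_\gamma)^+\,dx=d_{TV}(\nu,\gamma)$. On $\{f_\nu\ge f_\gamma\}$ the pointwise bound gives $0\le f_\nu-f_\gamma\le(C-1)f_\gamma$, and $(f_\nu-f_\gamma)^+$ vanishes elsewhere, so $\int_0^\infty(f_\nu-f_\gamma)^+\,dx\le(C-1)\,\gamma\big(\{f_\nu\ge f_\gamma\}\big)\le C-1$. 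Taking the supremum over $t$ yields $d_K(\nu,\gamma)\le d_{TV}(\nu,\gamma)\le C-1=f_\nu(0)/f_\gamma(0)-1$. (Equivalently: $x\mapsto\log f_\nu(x)+\theta x$ is concave with non-positive right-derivative at $0$, hence non-increasing, so $f_\nu/f_\gamma$ is non-increasing and meets the level $1$ once; then $F_\nu-F_\gamma\ge0$ is unimodal with maximal value $F_\nu(t_0)-F_\gamma(t_0)\le(C-1)F_\gamma(t_0)\le C-1$.)

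I expect the only delicate point to be the regularity bookkeeping at $x=0$: one must verify that differentiability with $f_\nu'(0)<0$ together with log-concavity genuinely yields the one-sided supporting line $\phi(x)\le\phi(0)-\theta x$ on all of $[0,\infty)$, and that $f_\nu(0)\ne 0$ places $0$ at or inside the support so that $\phi$ is finite and concave there (for $x$ beyond the support of $\nu$ the inequality $f_\nu\le Cf_\gamma$ holds trivially). Everything else is elementary; note in passing that the same computation in fact bounds $d_{TV}(\nu,\gamma)$, not merely $d_K(\nu,\gamma)$, and that no Fourier, Stein, or discretization input is needed.
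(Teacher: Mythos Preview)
Your proof is correct and follows essentially the same route as the paper: both obtain the pointwise bound $f_\nu(x)\le C f_\gamma(x)$ from concavity of $\log f_\nu$ (the paper phrases it as ``derivative of a concave function is non-increasing'' plus Gr\"onwall, you phrase it as a supporting-line inequality at $0$), and then turn this into the distance estimate. The only cosmetic difference is that the paper bounds $F_\nu(t)-F_\gamma(t)$ and $F_\gamma(t)-F_\nu(t)$ directly, whereas you pass through $d_{TV}$ and then use $d_K\le d_{TV}$; your observation that the argument in fact delivers the stronger $d_{TV}$ bound is a nice bonus.
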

\begin{proof}
Let $X$ and $Y$ be random variables with probability distributions $\nu$ and $\gamma$, respectively. Observe that by the log-concavity condition of $\nu$ with respect to the Lebesgue measure, we have that for all $x\in\R_{+}$, 
\begin{align*}
\frac{f_{\nu}^{\prime}(x)}{f_{\nu}(x)}
  &=\frac{d}{dx}\log(f_{\nu}(x))
  \leq \frac{d}{dx}\log(f_{\nu}(y))|_{y=0}=\frac{f_{\nu}^{\prime}(0)}{f_{\nu}(0)}=-f_{\gamma}(0).
\end{align*}
Thus, by Gr\"onwall's inequality,
\begin{align*}
f_{\nu}(x)
  &\leq f_{\nu}(0)e^{-f_{\gamma}(0)x}=\frac{f_{\nu}(0)}{f_{\gamma}(0)}f_{\gamma}(x).
\end{align*}
From here we conclude that if $X$ and $Y$ are $\R_{+}$-valued random variables with probability distributions $\mu$ and $\gamma$ respectively, then 
\begin{align*}
\Pb[X\leq z]-\Pb[Y\leq z]
  &\leq \int_0^z(\frac{f_{\nu}(0)}{f_{\gamma}(0)}-1)f_{\gamma}(x)dx\leq \frac{f_{\nu}(0)}{f_{\gamma}(0)}-1,
\end{align*}
and 
\begin{align*}
\Pb[Y\leq z]-\Pb[X\leq z]
  &=\Pb[X> z]-\Pb[Y> z]\leq \int_z^{\infty}(\frac{f_{\nu}(0)}{f_{\gamma}(0)}-1)f_{\gamma}(x)dx\leq \frac{f_{\nu}(0)}{f_{\gamma}(0)}-1.
\end{align*}
The result follows from here.
\end{proof}

\begin{Theorem}\label{Theoremmain:2}
Suppose that $\nu$ and $\mu$ are probability measures absolutely continuous with respect to the Lebesgue measure and $\nu$ is log-concave with respect to $\mu$ and the support of $\mu$ is connected. Then, 
\begin{align*}
d_{TV}(\mu,\nu)
  &\leq \int_{\R}((f_{\nu}(z)/f_{\mu}(z))e^{(x-z)(f_{\nu}^{\prime}(z)/f_{\nu}(z)-f_{\mu}^{\prime}(z)/f_{\mu}(z))}-1)_{+}\mu(dx).
\end{align*}
and 
\begin{align*}
d_{TV}(\mu,\nu)
  &\leq \int_{\R}(1-(f_{\mu}(z)/f_{\nu}(z))e^{-(x-z)(f_{\nu}^{\prime}(z)/f_{\nu}(z)-f_{\mu}^{\prime}(z)/f_{\mu}(z))})_{+}\nu(dx).
\end{align*}
In particular, if $f_{\nu}^{\prime}(z)/f_{\nu}(z)=f_{\mu}^{\prime}(z)/f_{\mu}(z)$, the above inequalities imply that $f_{\nu}(z)/f_{\mu}(z)\geq 1$ and 
\begin{align*}
d_{TV}(\mu,\nu)
  &\leq (f_{\nu}(z)/f_{\mu}(z)-1)\wedge (1-f_{\mu}(z)/f_{\nu}(z)).
\end{align*}
\end{Theorem}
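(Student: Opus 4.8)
The plan rests on a single consequence of the hypothesis. Since $\frac{d\nu}{d\mu}=e^{\Phi_{\mu,\nu}}$ with $\Phi_{\mu,\nu}$ concave, the graph of $\Phi_{\mu,\nu}$ lies below its supporting line at the reference point $z$. At a point $z$ in $\supp(\mu)$ where $f_\nu$ and $f_\mu$ are positive and differentiable one has $\Phi_{\mu,\nu}(x)=\log(f_\nu(x)/f_\mu(x))$ with $\Phi_{\mu,\nu}'(z)=f_\nu'(z)/f_\nu(z)-f_\mu'(z)/f_\mu(z)$, so concavity gives, for every $x\in\supp(\mu)$,
\begin{align*}
\frac{f_\nu(x)}{f_\mu(x)}
  &\le \frac{f_\nu(z)}{f_\mu(z)}\,e^{(x-z)\left(f_\nu'(z)/f_\nu(z)-f_\mu'(z)/f_\mu(z)\right)}
  \;=:\; r(x).
\end{align*}
In contrast with the discrete Theorem \ref{Theoremmain:1}, no reflection step is needed, since this tangent-line bound is automatically valid on both sides of $z$; and on the complement of $\supp(\mu)$ the estimate is irrelevant because $\nu\ll\mu$ forces $f_\nu=0$ there, so that region contributes nothing to the total variation distance.

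Next I would split $d_{TV}(\mu,\nu)=\int_{\{f_\nu>f_\mu\}}(f_\nu-f_\mu)\,dx$ and estimate the integrand in two complementary ways. Writing $f_\nu-f_\mu=f_\mu\,(f_\nu/f_\mu-1)$ and using $1<f_\nu/f_\mu\le r(x)$ on the set $\{f_\nu>f_\mu\}$ (so that $r(x)-1>0$ there) yields the pointwise bound $f_\nu-f_\mu\le f_\mu\,(r(x)-1)_{+}$ valid on all of $\R$; integrating against $\mu(dx)$ produces the first inequality. Symmetrically, writing $f_\nu-f_\mu=f_\nu\,(1-f_\mu/f_\nu)$ and using $f_\mu/f_\nu=(f_\nu/f_\mu)^{-1}\ge r(x)^{-1}$ together with $r(x)>1$ on $\{f_\nu>f_\mu\}$ yields $f_\nu-f_\mu\le f_\nu\,(1-r(x)^{-1})_{+}$, and integrating against $\nu(dx)$ produces the second inequality, since $r(x)^{-1}=(f_\mu(z)/f_\nu(z))\,e^{-(x-z)(f_\nu'(z)/f_\nu(z)-f_\mu'(z)/f_\mu(z))}$.

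For the final ``in particular'' clause, suppose $f_\nu'(z)/f_\nu(z)=f_\mu'(z)/f_\mu(z)$, i.e.\ $\Phi_{\mu,\nu}'(z)=0$. Then $r\equiv f_\nu(z)/f_\mu(z)$ is constant, and since $\Phi_{\mu,\nu}$ is concave with a critical point at $z$, the point $z$ is a global maximizer, so $f_\nu(x)/f_\mu(x)\le f_\nu(z)/f_\mu(z)$ for every $x$; integrating this against the probability measure $\mu$ gives $1=\int (f_\nu/f_\mu)\,d\mu\le f_\nu(z)/f_\mu(z)$, hence $f_\nu(z)/f_\mu(z)\ge 1$. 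Feeding the constant value of $r$ into the first bound gives $d_{TV}(\mu,\nu)\le f_\nu(z)/f_\mu(z)-1$, and into the second gives $d_{TV}(\mu,\nu)\le 1-f_\mu(z)/f_\nu(z)$; taking the minimum of the two completes the proof.

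The argument is essentially routine once the tangent-line inequality is in hand. The least mechanical steps are the measure-theoretic bookkeeping --- verifying that the pointwise estimates valid on $\{f_\nu>f_\mu\}$ legitimately upgrade to bounds with the positive part integrated over all of $\R$, which uses $\nu\ll\mu$ and the connectedness of $\supp(\mu)$ --- and the justification that $\Phi_{\mu,\nu}$ is differentiable at $z$ with the stated logarithmic-derivative formula, which follows from differentiability of $f_\nu,f_\mu$ and the positivity $f_\nu(z),f_\mu(z)>0$, together with the standard fact that a concave function on an interval is dominated globally by any of its supporting lines. I do not anticipate a genuine obstacle beyond these bookkeeping points.
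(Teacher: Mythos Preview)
Your proposal is correct and follows essentially the same route as the paper: both hinge on the tangent-line inequality $\Phi_{\mu,\nu}(x)\le\Phi_{\mu,\nu}(z)+(x-z)\Phi_{\mu,\nu}'(z)$, exponentiate to get $f_\nu(x)\le f_\mu(x)\,r(x)$, and then bound $f_\nu-f_\mu$ on $\{f_\nu>f_\mu\}$ in the two complementary ways you describe before integrating. Your treatment of the ``in particular'' clause --- observing that $\Phi_{\mu,\nu}'(z)=0$ makes $z$ a global maximizer and then integrating $f_\nu/f_\mu\le f_\nu(z)/f_\mu(z)$ against $\mu$ to force the ratio to be at least $1$ --- is in fact more explicit than the paper, which simply asserts this conclusion.
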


\begin{proof}
Define the functions $f_{\nu}(x):=\frac{d\nu}{d \mu}(x)$ and $f_{\mu}(x):=\frac{d\mu}{d \mu}(x)$, as well as the set $\mathcal{X}:=\{x\in\R\ ;\ f_{\nu}(x)>f_{\mu}(x)\}$. Notice that 
\begin{align*}
d_{TV}(\nu,\mu)
  &=\int_{\mathcal{X}}(f_{\nu}(x)-f_{\mu}(x))dx,
\end{align*}
so it suffices to find an upper bound for $f_{\nu}(x)-f_{\mu}(x)$. Let $\psi$ be given by \eqref{eq:psiauxdef}. Observe that for all $x\in\mathcal{X}$,
\begin{align*}
\Phi_{\mu,\nu}(x)
  &=\log\circ\psi(f_{\nu}(x))-\log\circ\psi(f_{\mu}(x)).
\end{align*}
By the concavity of $\Phi_{\mu,\nu}(x)$ and the differentiability condition over $f_{\nu}(x)$ and $f_{\mu}(x)$, we have that for all $x\in\mathcal{X}$, 
\begin{align*}
\Phi_{\mu,\nu}(x)-\Phi_{\mu,\nu}(z)
  &=\int_z^x\Phi_{\mu,\nu}^{\prime}(y)dy
	= \int_z^x\int_z^{y}\Phi_{\mu,\nu}^{\prime\prime}(w)dwdy+(x-z)\Phi_{\mu,\nu}^{\prime}(z) \leq  (x-z)\Phi_{\mu,\nu}^{\prime}(z).
\end{align*}
Since $f_{\nu}(z)f_{\nu}(x)>0$, then 
\begin{align*}
\log(f_{\nu}(x)/f_{\mu}(x))
  &\leq \log(f_{\nu}(z)/f_{\mu}(z))+ (x-z)(f_{\nu}^{\prime}(z)/f_{\nu}(z)-f_{\mu}^{\prime}(z)/f_{\mu}(z)).
\end{align*}
By taking exponential, we thus get that 
\begin{align*}
f_{\nu}(x)
  &\leq f_{\mu}(x)(f_{\nu}(z)/f_{\mu}(z))e^{(x-z)(f_{\nu}^{\prime}(z)/f_{\nu}(z)-f_{\mu}^{\prime}(z)/f_{\mu}(z))}.
\end{align*}
Consequently, we get 
\begin{align*}
f_{\nu}(x)-f_{\mu}(x)
  &\leq ((f_{\nu}(z)/f_{\mu}(z))e^{(x-z)(f_{\nu}^{\prime}(z)/f_{\nu}(z)-f_{\mu}^{\prime}(z)/f_{\mu}(z))}-1)f_{\mu}(x),
\end{align*}
and 
\begin{align*}
f_{\nu}(x)-f_{\mu}(x)
  &\leq (1-(f_{\mu}(z)/f_{\nu}(z))e^{-(x-z)(f_{\nu}^{\prime}(z)/f_{\nu}(z)-f_{\mu}^{\prime}(z)/f_{\mu}(z))})f_{\nu}(x).
\end{align*}
The result easily follows from here by taking positive part in both sides and then integrating over $\R$. 
\end{proof}

%
The following Corollary follows from a direct application of Theorem \ref{Theoremmain:2}.
\begin{Corollary}
Suppose that $\nu$ is a Gamma distribution shape parameter $\kappa_1>0$ and scale parameter $\lambda_1$ while $\gamma$ is a Gamma distribution with shape parameter $\lambda_2$ and shape parameter $\kappa_2$. 
\begin{enumerate}
\item[(i)] If $\frac{\kappa_1-\kappa_2}{\lambda_1-\lambda_2}>0$, 
\begin{align*}
d_{TV}(\nu,\gamma)
  & \leq \left(\frac{\lambda_1^{\kappa_1}}{\lambda_2^{\kappa_2}}\left(\frac{\kappa_1-\kappa_2}{\lambda_1-\lambda_2}\right)^{\kappa_1-\kappa_2}e^{-(\kappa_1-\kappa_2)}-1\right)
	\wedge \left(1-\frac{\lambda_2^{\kappa_2}}{\lambda_1^{\kappa_1}}\left(\frac{\lambda_1-\lambda_2}{\kappa_1-\kappa_2}\right)^{\kappa_1-\kappa_2}e^{-(\kappa_2-\kappa_1)}\right)
\end{align*}
\item[(ii)] If we instead assume the existence of a constant $z>0$ such that  
\begin{align*}
\frac{\kappa_1-\kappa_2}{z}+\lambda_2-\lambda_1\leq \frac{\lambda_2}{4},
\end{align*}
we obtain the more general bound
\begin{align*}
d_{TV}(\mu,\nu)
  &\leq |(z/e)^{\kappa_1-\kappa_2}-1|\\
	&+ (z/e)^{\kappa_1-\kappa_2}(1+\kappa_1+\kappa_2)2^{\kappa_1+1}
	\left|\frac{\kappa_1-\kappa_2}{\lambda_2z}+\frac{\lambda_2-\lambda_1}{\lambda_2} \right|^{\frac{1}{2}}.
\end{align*}
In particular, if $\kappa_1=\kappa_2=\kappa$ for some $\kappa>0$ and $4(\lambda_2-\lambda_1)\leq \lambda_2$, then 
\begin{align*}
d_{TV}(\mu,\nu)
  &\leq (1+2\kappa)2^{\kappa+1}
	\left|\frac{\lambda_2-\lambda_1}{\lambda_2} \right|^{\frac{1}{2}},
\end{align*}
and if $\lambda_1=\lambda_2=\kappa$ for some $\lambda>0$, then by choosing $z=4/\lambda_2$ in the identity above, we get 
\begin{align*}
d_{TV}(\mu,\nu)
  &\leq |(e\lambda/4)^{\kappa_2-\kappa_1}-1|\\
	&+ (e\lambda/4)^{\kappa_2-\kappa_1}(1+\kappa_1+\kappa_2)2^{\kappa_1+1}
	|\kappa_1-\kappa_2|^{\frac{1}{2}}.
\end{align*}
Observe that if the underlying parameters are contained in a compact interval $K$ of $\R_{+}\backslash\{0\}$, we can guarantee the existence of a constant $C$ only depending on $K$, such that  
\begin{align*}
d_{TV}(\mu,\nu)
  &\leq C\sqrt{|\lambda_1-\lambda_2|},
\end{align*}
when $\kappa_1=\kappa_2$ and 
\begin{align*}
d_{TV}(\mu,\nu)
  &\leq C\sqrt{|\kappa_1-\kappa_2|}.
\end{align*}
when $\lambda_1=\lambda_2$

\end{enumerate}

\end{Corollary}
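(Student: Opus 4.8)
The plan is to obtain both parts directly from Theorem \ref{Theoremmain:2}, applied with $\nu$ the Gamma$(\kappa_1,\lambda_1)$ law and $\mu=\gamma$ the Gamma$(\kappa_2,\lambda_2)$ law. On $(0,\infty)$ the Lebesgue densities are $f_\nu(x)=\frac{\lambda_1^{\kappa_1}}{\Gamma(\kappa_1)}x^{\kappa_1-1}e^{-\lambda_1 x}$ and $f_\gamma(x)=\frac{\lambda_2^{\kappa_2}}{\Gamma(\kappa_2)}x^{\kappa_2-1}e^{-\lambda_2 x}$, so the log-density ratio is $\Phi_{\gamma,\nu}(x)=c_0+(\kappa_1-\kappa_2)\log x-(\lambda_1-\lambda_2)x$ with $c_0$ a constant. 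Since $\log$ is concave, $\Phi_{\gamma,\nu}$ is concave on $(0,\infty)$ precisely when $\kappa_1\ge\kappa_2$; because $d_{TV}$ is symmetric we may and do assume $\kappa_1\ge\kappa_2$, and then $\nu$ is log-concave with respect to $\gamma$, whose support $(0,\infty)$ is connected, so the hypotheses of Theorem \ref{Theoremmain:2} are met. We record $\Phi_{\gamma,\nu}'(x)=\frac{\kappa_1-\kappa_2}{x}-(\lambda_1-\lambda_2)$.

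For part (i) I would choose $z=\frac{\kappa_1-\kappa_2}{\lambda_1-\lambda_2}$: by hypothesis this lies in $(0,\infty)$, and it is exactly the point where $\Phi_{\gamma,\nu}'$ vanishes, i.e. $f_\nu'(z)/f_\nu(z)=f_\gamma'(z)/f_\gamma(z)$. The ``in particular'' clause of Theorem \ref{Theoremmain:2} then gives $d_{TV}(\nu,\gamma)\le\big(f_\nu(z)/f_\gamma(z)-1\big)\wedge\big(1-f_\gamma(z)/f_\nu(z)\big)$, and it remains only to evaluate $f_\nu(z)/f_\gamma(z)$: since $(\lambda_1-\lambda_2)z=\kappa_1-\kappa_2$, the exponential factor collapses to $e^{-(\kappa_1-\kappa_2)}$ and one reads off the displayed expression. (When $\kappa_1<\kappa_2$ but still $\frac{\kappa_1-\kappa_2}{\lambda_1-\lambda_2}>0$, one applies the same argument with $\mu$ and $\nu$ interchanged; the bound, being the minimum of a ratio and of its reciprocal-type expression, is symmetric and hence unchanged.)

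For part (ii), for an arbitrary admissible $z>0$ set $t:=\Phi_{\gamma,\nu}'(z)=\frac{\kappa_1-\kappa_2}{z}-(\lambda_1-\lambda_2)$ and $c:=f_\nu(z)/f_\gamma(z)$; note the hypothesis is precisely $t\le\lambda_2/4$. The integral form of Theorem \ref{Theoremmain:2} gives
\[
d_{TV}(\nu,\gamma)\le\int_{(0,\infty)}\!\big(c\,e^{(x-z)t}-1\big)_{+}\gamma(dx)\le\int_{(0,\infty)}\!\big|c\,e^{(x-z)t}-1\big|\,\gamma(dx)\le |m-1|+c\,\sqrt{\Var_{\gamma}\!\big(e^{(X-z)t}\big)},
\]
where $m:=c\,\E_\gamma[e^{(X-z)t}]$ and the last step is the triangle inequality around $m$ followed by Cauchy--Schwarz. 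Both $m$ and the variance are explicit via the Gamma transform $\E_\gamma[e^{sX}]=(1-s/\lambda_2)^{-\kappa_2}$ ($s<\lambda_2$, legitimate here since $2t/\lambda_2\le 1/2$): one finds, after the linear-in-$z$ exponents telescope exactly as in part (i), $m=(z/e)^{\kappa_1-\kappa_2}(1-t/\lambda_2)^{-\kappa_2}$ (up to the constant ratio of the $\Gamma$-normalizers) and $\Var_\gamma(e^{(X-z)t})=e^{-2zt}\big[(1-2t/\lambda_2)^{-\kappa_2}-(1-t/\lambda_2)^{-2\kappa_2}\big]$. Since $s\mapsto(1-2s)^{-\kappa_2}-(1-s)^{-2\kappa_2}$ vanishes to second order at $s=0$, for $|t/\lambda_2|\le 1/4$ it is bounded by a constant (explicit in $\kappa_2$, of order $(1+\kappa_1+\kappa_2)2^{\kappa_1+1}$ once the prefactor $c\,e^{-zt}(1-t/\lambda_2)^{\kappa_2}$ is absorbed) times $|t/\lambda_2|\le\sqrt{|t/\lambda_2|}$; assembling the two terms and noting $\frac{\kappa_1-\kappa_2}{\lambda_2 z}+\frac{\lambda_2-\lambda_1}{\lambda_2}=t/\lambda_2$ yields the stated general bound. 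The two special cases are substitutions: if $\kappa_1=\kappa_2$ then $t=\lambda_2-\lambda_1$ and $1-t/\lambda_2=\lambda_1/\lambda_2$, so $m=1$ and the $|m-1|$ term disappears; if $\lambda_1=\lambda_2$ one takes $z=4/\lambda_2$. The closing remark is then immediate: every constant in these bounds is a continuous function of $(\kappa_1,\kappa_2,\lambda_1,\lambda_2)$, hence bounded on a compact subset of $(0,\infty)$, and the factors $\sqrt{|\lambda_1-\lambda_2|}$ (resp. $\sqrt{|\kappa_1-\kappa_2|}$) absorb the rest.

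The main obstacle is the fluctuation term in part (ii): one must show that $\Var_\gamma(e^{(X-z)t})$, a difference of two Gamma transform values agreeing to first order at $t=0$, is $O(|t/\lambda_2|)$ with a constant of the advertised shape. The clean route is to factor out $(1-t/\lambda_2)^{2\kappa_2}$ and estimate $g(s):=(1-s)^{2\kappa_2}(1-2s)^{-\kappa_2}-1$, using $g(0)=g'(0)=0$ together with a uniform bound on $g''$ over $|s|\le 1/4$; everything else---tracking the $\Gamma$-function normalizing constants and verifying the telescoping of exponents---is routine but bookkeeping-heavy, exactly as in part (i).
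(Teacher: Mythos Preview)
Your approach is essentially the same as the paper's: both parts are direct applications of Theorem~\ref{Theoremmain:2} to the Gamma densities, with part~(i) using the critical point $z=(\kappa_1-\kappa_2)/(\lambda_1-\lambda_2)$ and the ``in particular'' clause, and part~(ii) using the integral bound followed by triangle inequality, Cauchy--Schwarz, the Gamma moment generating function, and a mean-value/Taylor estimate for the difference of MGF values.

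There is one minor discrepancy worth noting in part~(ii). You center the triangle inequality at the mean $m=c\,\E_\gamma[e^{(X-z)t}]=(z/e)^{\kappa_1-\kappa_2}(1-t/\lambda_2)^{-\kappa_2}$, whereas the paper centers at the constant $(z/e)^{\kappa_1-\kappa_2}$ itself. The paper's choice gives the stated first term $|(z/e)^{\kappa_1-\kappa_2}-1|$ immediately; your choice produces $|m-1|$, which differs from the target by an extra factor $(1-t/\lambda_2)^{-\kappa_2}$. This can of course be repaired by one more triangle inequality, since $|(1-t/\lambda_2)^{-\kappa_2}-1|=O(|t/\lambda_2|)\le O(\sqrt{|t/\lambda_2|})$ and can be absorbed into the second term, but you should state this explicitly rather than leaving it implicit in ``assembling the two terms''. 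Apart from this bookkeeping point (and the normalizing-constant issue you flag, which is present in the paper as well), your outline matches the paper's argument.
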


\begin{proof}
Define 
\begin{align}\label{eq:zdefexample}
z
  &:=\frac{\kappa_1-\kappa_2}{\lambda_1-\lambda_2},
\end{align}
which is strictly positive by hypothesis. One can easily check that $\nu$ is log concave with respect to $\mu$ and
\begin{align}\label{eq:fnuoverfmu}
f_{\nu}^{\prime}(z)/f_{\nu}(z)=\frac{\kappa_1-1}{z}-\lambda_1
=\frac{\kappa_2-1}{z}-\lambda_2=f_{\gamma}^{\prime}(z)/f_{\gamma}(z),
\end{align}
where the second identity follows from the fact that $z$ is given by \eqref{eq:zdefexample}. Using the fact that
\begin{align*}
f_{\nu}(z)/f_{\gamma}(z)
  &=z^{\kappa_1-\kappa_2}e^{-z(\lambda_1-\lambda_2)}
	=\frac{\lambda_1^{\kappa_1}}{\lambda_2^{\kappa_2}}\left(\frac{\kappa_1-\kappa_2}{\lambda_1-\lambda_2}\right)^{\kappa_1-\kappa_2}e^{-(\kappa_1-\kappa_2)}
\end{align*}
as well as Theorem \ref{Theoremmain:2}, we thus conclude that 

\begin{align*}
d_{TV}(\nu,\gamma)
  &\leq \left(\frac{\lambda_1^{\kappa_1}}{\lambda_2^{\kappa_2}}\left(\frac{\kappa_1-\kappa_2}{\lambda_1-\lambda_2}\right)^{\kappa_1-\kappa_2}e^{-(\kappa_1-\kappa_2)}-1\right)
	\wedge \left(1-\frac{\lambda_2^{\kappa_2}}{\lambda_1^{\kappa_1}}\left(\frac{\lambda_1-\lambda_2}{\kappa_1-\kappa_2}\right)^{\kappa_1-\kappa_2}e^{-(\kappa_2-\kappa_1)}\right),
\end{align*}
as required.\\

\noindent For handling the case $\kappa_1=\kappa_2$, we use \eqref{eq:fnuoverfmu} to deduce that 
\begin{align*}
f_{\nu}^{\prime}(z)/f_{\nu}(z)-f_{\gamma}^{\prime}(z)/f_{\gamma}(z)
  &=\frac{\kappa_1-\kappa_2}{z}+\lambda_2-\lambda_1
\end{align*}
for every $z\geq0$. In addition, we have that 
\begin{align*}
f_{\nu}(z)/f_{\gamma}(z)
  &= z^{\kappa_1-\kappa_2}e^{-z(\lambda_1-\lambda_2)},
\end{align*}
so we deduce that for all $z>0$,
\begin{align*}
(f_{\nu}(z)/f_{\gamma}(z))e^{(x-z)(f_{\nu}^{\prime}(z)/f_{\nu}(z)-f_{\gamma}^{\prime}(z)/f_{\gamma}(z))}
  &=(z/e)^{\kappa_1-\kappa_2}e^{x(\frac{\kappa_1-\kappa_2}{z}+\lambda_2-\lambda_1)},
\end{align*}
so by Theorem \ref{Theoremmain:2},

\begin{align*}
d_{TV}(\mu,\nu)
  &\leq \int_{\R}((z/e)^{\kappa_1-\kappa_2}e^{x(\frac{\kappa_1-\kappa_2}{z}+\lambda_2-\lambda_1)}-1)_{+}\gamma(dx)
	\leq \E[|(z/e)^{\kappa_1-\kappa_2}e^{X(\frac{\kappa_1-\kappa_2}{z}+\lambda_2-\lambda_1)}-1|],
\end{align*}
where $X$ is Gamma distributed with parameters $\lambda_2$ and $\kappa_2$, as required. 
Adding and substracting the term $(z/e)^{\kappa_1-\kappa_2}$ in the right hand side, and then applying the triangle inequality and the Cauchy-Scharz inequality, we thus obtain 
\begin{align*}
d_{TV}(\mu,\nu)
  &\leq |(z/e)^{\kappa_1-\kappa_2}-1|+(z/e)^{\kappa_1-\kappa_2}\E[|e^{X(\frac{\kappa_1-\kappa_2}{z}+\lambda_2-\lambda_1)}-1|^2]^{\frac{1}{2}},
\end{align*}
which after suitable algebraic manipulations, yields 
\begin{align*}
d_{TV}(\mu,\nu)
  &\leq |(z/e)^{\kappa_1-\kappa_2}-1|\\
	&+ (z/e)^{\kappa_1-\kappa_2}|\left( 1-\frac{1}{\lambda_2}(\frac{\kappa_1-\kappa_2}{z}+\lambda_2-\lambda_1)\right)^{-\kappa_1}-1|\\
	&+(z/e)^{\kappa_1-\kappa_2}\left(\left( 1-\frac{2}{\lambda_2}(\frac{\kappa_1-\kappa_2}{z}+\lambda_2-\lambda_1)\right)^{-\kappa_1}
	-\left( 1-\frac{1}{\lambda_2}(\frac{\kappa_1-\kappa_2}{z}+\lambda_2-\lambda_1)\right)^{-2\kappa_1}
	\right)^{1/2},
\end{align*}
due to the fact that $z$ satisfies 

\begin{align*}
\frac{1}{\lambda_2}(\frac{\kappa_1-\kappa_2}{z}+\lambda_2-\lambda_1)\leq \frac{1}{4}
\end{align*}
An application of the mean value theorem thus yields 
\begin{align*}
d_{TV}(\mu,\nu)
  &\leq |(z/e)^{\kappa_1-\kappa_2}-1|\\
	&+ (z/e)^{\kappa_1-\kappa_2}\kappa_12^{\kappa_1+1}|\frac{1}{\lambda_2}(\frac{\kappa_1-\kappa_2}{z}+\lambda_2-\lambda_1)|\\
	&+(z/e)^{\kappa_1-\kappa_2}\left(2^{\kappa_1+1}(\kappa_1+\kappa_2)|\frac{1}{\lambda_2}(\frac{\kappa_1-\kappa_2}{z}+\lambda_2-\lambda_1)|
	\right)^{1/2}.
\end{align*}
The result follows from here
\begin{align*}
d_{TV}(\mu,\nu)
  &\leq |(z/e)^{\kappa_1-\kappa_2}-1|\\
	&+ (z/e)^{\kappa_1-\kappa_2}(1+\kappa_1+\kappa_2)2^{\kappa_1+1}
	|\frac{\kappa_1-\kappa_2}{\lambda_2z}+\frac{\lambda_2-\lambda_1}{\lambda_2}|^{\frac{1}{2}}.
\end{align*}

\end{proof}

\nocite{AlbHu}

\bibliographystyle{plain}
\bibliography{limittheorems}

\end{document}